\newtheorem{theorem}{Theorem}[section]
\newtheorem{lemma}{Lemma}[section]
\newtheorem{remark}{Remark}[section]
\newtheorem{example}{Example}[section]
\newtheorem{assumption}{Assumption}[section]
\numberwithin{equation}{section}
\newcommand{\dd}{\,{\rm d}}
\begin{document}
	
\begin{frontmatter}
	\title{Efficient Exponential Integrator Finite Element Method for \\Semilinear Parabolic Equations}
	\author[sjtu]{Jianguo Huang\fnref{hjgfootnote}}
	\ead{jghuang@sjtu.edu.cn}
	\address[sjtu]{School of Mathematical Sciences and MOE-LSC, Shanghai Jiao Tong University, Shanghai 200240, China}
	\fntext[hjgfootnote]{J. Huang's research was partially supported by National Natural Science Foundation of China under grant number  11571237. L. Ju's research was partially supported by US National Science Foundattion  under grant number DMS-2109633 and by US Department of Energy under grant number DE-SC0020270.}
	\author[scu]{Lili Ju\fnref{jllfootnote}}
	\ead{ju@math.sc.edu}
	\address[scu]{Department of Mathematics, University of South Carolina, Columbia, SC 29208, USA }
	\fntext[jllfootnote]{L. Ju's research was partially supported by US National Science Foundattion  under grant number DMS-2109633 and by US Department of Energy under grant number DE-SC0020270.}
	\author[sjtu]{Yuejin Xu}
	\ead{xiaotianma@sjtu.edu.cn}
	\begin{abstract}
		In this paper, we propose  an efficient exponential integrator finite element  method for solving a class of semilinear parabolic equations in rectangular domains. The proposed method first performs the spatial discretization of the model equation using the finite element approximation with  continuous multilinear rectangular basis functions, and then takes the explicit exponential Runge-Kutta approach for time integration of the resulting semi-discrete system to produce fully-discrete numerical solution. Under certain regularity assumptions, error estimates  measured in $H^1$-norm are successfully derived for the proposed schemes with one and two RK stages. More remarkably,  the mass and coefficient matrices of the proposed method can be simultaneously diagonalized with an orthogonal matrix, which provides a fast solution process based on  tensor product spectral decomposition and fast Fourier transform. Various numerical experiments in two and three dimensions are  also carried out to validate the theoretical results and demonstrate the excellent performance of the proposed method.
		
	\end{abstract}
	\begin{keyword}
		Semilinear parabolic equations, exponential integrator, finite element method, fast Fourier transform, Runge-Kutta, error estimates
	\end{keyword}		
\end{frontmatter}	
	
\section{Introduction}
In this paper, we are devoted to studying numerical solution of the semilinear parabolic equation taking the following form:
\begin{equation}
\label{eq1-1}
u_t = D\Delta u +f(t,u), \quad\;\; \bm x \in \Omega, \; t\geq 0,
\end{equation}
where $\Omega$ is an open rectangular domain in $\mathbb{R}^d$, $D>0$ is the constant diffusion coefficient, $u(t,\bm x)$ is the unknown function and $f(t,u)$ is the nonlinear reaction term of the underlying system. It is well known that the equation \eqref{eq1-1} together with appropriate initial value and boundary condition has been widely used in mathematical models for various scientific and engineering applications, such as the Allen-Cahn equation and some other phase field models \cite{DuFeng2020} for describing the phase transition and separation \cite{Acta1979}, the time-dependent advection diffusion equation and the Navier-Stokes equations for fluid dynamics \cite{Roger1984}, and the Ginzburg-Landau equation \cite{DGP1992} for superconductivity.
Many related numerical methods, especially for its temporal discretization, have been proposed and analyzed  in the past few decades. Some typical ones include
fully-implicit scheme\cite{FePr2003} implicit-explicit method \cite{AscherRuuth1995,WWL2009}, integrating factor  (IF) method \cite{Lawson1967,NZZ2006,WZN2014}, split step method \cite{BoydJohn2001, SanzCalvo1994}, sliders methods \cite{Driscoll2002},  exponential time differencing  (ETD) method \cite{CoxMatthews2002,BorislavWill2005}, invariant energy quadratization (IEQ) method \cite{Yang2016},
integrating factor Runge-Kutta (IFRK) method \cite{IGG2018,LLJF2021}, scalar auxiliary variable (SAV) method \cite{ShXuYa2019} and so on.

The  IF, ETD and IFRK methods fall into the category of exponential integrator-based methods and have received much attention in the past two decades due to their effectiveness and stability in handling stiff semilinear systems, such as the equation  \eqref{eq1-1} with very small $D$ and highly nonlinear $f$.  All the three methods can maintain the exponential behavior of the system in some extent but also have significant differences.
The IF and ETD  methods both start with the Duhamel formula, but they differ  on the way of evaluating the time integration of the terms resulting from multiplication of the exponential integrator and  $f$. 
The IF method uses quadrature rules directly to the whole integrand, while the ETD method first approximates the nonlinear part using polynomial interpolation and then performs exact integration on the resulting integrands. Instead,  the IFRK method first applies an exponential-type variable transformation to eliminate the linear term from the equation and then use the standard Runge-Kutta method for time integration of the transformed system. 
The $L^2$-stability and $L^{\infty}$-stability of ETD methods were established in \cite{DuZhu2004} by means of Fourier transformation (FFT) and maximum bound principle of the underlying equations. Some researches have  been done to  further enhance the stability of ETD method \cite{DuZhu2004, KassamTrefethen2005, MohebbiDehghan2010} and improve their numerical advantages by exploring the stability factor numerically \cite{CoxMatthews2002}.
Motivated by \cite{Pope1963},    a family of exponential Runge-Kutta methods \cite{HochbruckOstermann2005a,HochbruckOstermann2005b,HochbruckOstermann2010,LUAN2014} were systematically developed  with corresponding  consistency and order conditions based on the theory of semigroup.  Exponential integrator-based  methods have also been extended to various areas, such as solution of systems with nondiagonal operators \cite{KassamTrefethen2005}, Rosenbrock-type method \cite{HochbruckOstermann2008}, low-regularity integrators \cite{RoSc2021}, adaptive time stepping \cite{WhalenBrio2015}, and general order parabolic equations \cite{HuangJu2019b,HuangJu2019a}.

A significant feature of the exponential integrator-based methods  is that they all require evaluations of the products of matrix exponentials and vectors. Such operations generally are implemented with Krylov subspace method \cite{DuJuLiQiao2021}, which is still time-consuming for large scale systems.
To overcome this difficulty, some fast FFT-based  algorithms were proposed for efficient implementation of ETD methods
in rectangular domains, but  they only work for the case of using  finite difference method for spatial discretization
\cite{JuZhang2015,ZhuJu2016,DJLQ2019}. The key idea is to utilize tensor product representation of discrete spatial operators and spectral decomposition in each dimension. Moreover, convergence analysis of ETD methods  to the equation  \eqref{eq1-1} highly depends on discrete maximum bound principles, which does not always hold for the classic finite element  or pseudo-spectral discretizations, see  \cite{DuJuLiQiao2021,JuLi2021} and references cited therein.

In this paper, we propose an efficient exponential integrator finite element (EIFE) method for solving the semilinear parabolic equation \eqref{eq1-1} in rectangular domains. In the proposed method, we first carry out the spatial discretization by using continuous multilinear rectangular finite elements to obtain a semi-discrete (in space) system, then apply the explicit exponential Runge-Kutta approach for time integration of the resulting system to achieve the fully-discrete scheme.  Error estimates measured  in $H^1$-norm are also successfully derived for the EIFE  schemes with one and two RK stages
for the problem with Dirichlet boundary condition. Specifically, we first rewrite the semi-discrete system as a finite dimensional spatial operator equation and then error estimate of the semi-discrete solution is naturally obtained by using  the variational formulation and the energy  method.  Next, the fully-discrete system is regarded as a finite dimensional evolution equation, we then adapt and verify the arguments from \cite{HochbruckOstermann2005b}  to estimate the errors of the fully-discrete solutions.
Furthermore, it is observed that the resulting mass and coefficient matrices in the EIFE method can be diagonalized simultaneously with an orthogonal matrix, whose multiplication with a vector can be efficiently computed using FFT, which is held for both Dirichlet and periodic boundary condition cases. Consequently, following \cite{JuZhang2015}, fast solution algorithms are able to be constructed based on the tensor product spectral decomposition.
To the best of our knowledge, the work presented in this paper is the first study on fast numerical method with rigorous fully-discrete error estimates, which combines exponential integrator in time and  finite element discretization in space.

The rest of the paper is organized as follows.  The  EIFE method is first proposed in Section \ref{algorithm-description}, and then its  fully-discrete error analysis is given in Section \ref{semilinear_theory}. The FFT-based fast implementation of EIFE method is illustrated in Section \ref{fastimpl}. In Section \ref{numerical}, various numerical experiments are carried out to validate the theoretical results and demonstrate the excellent performance of the EIFE method. Finally, some concluding remarks are drawn in Section \ref{seccon}.

\section{An exponential integrator finite element method}
\label{algorithm-description}

In this section, we will propose an exponential integrator finite element method  for solving the model equation \eqref{eq1-1}.
Let us start with some standard notations for later representations. Given a bounded Lipschitz domain $G\subset \mathbb{R}^{d}$
and a non-negative integer $s\ge0$, denote by $H^s(G)$ the standard Sobolev spaces on $G$
with the norm $\|\cdot\|_{s,G}$ and the semi-norm $|\cdot|_{s,G}$,  by $(\cdot,\,\cdot)_{G}$ the $L^2$-inner product on $G$, and by $H_0^s(G)$  the closure of $C_0^{\infty}(G)$ with respect to the norm $\|\cdot\|_{s,G}$.  {We also define
	$ \|v\|_{k,\infty,G}={\rm ess\ sup}_{|\bm \alpha| \leq k}\|D^{\bm \alpha}v\|_{L^{\infty}(\Omega)}
	$
	for any function $v$ such that the right-hand side term makes sense, where $\bm \alpha=(\alpha_1,\cdots,\alpha_d)$ is a multi-index and $|\bm\alpha|=\alpha_1+\cdots+\alpha_d$.}  We may omit the subscript for $G=\Omega$ when there is no confusion caused. For any non-negative integer $\ell\ge 0$, $\mathbb{P}_{\ell}(G)$ stands for the set of all polynomials on $G$ with the total degree at most $\ell$. Moreover, given two quantities $a$ and $b$, ``$a\lesssim{b}$" abbreviates ``$a\leq{C}b$", where $C$ is a generic positive constant; ``$a\eqsim b$" is equivalent to ``$a\lesssim b\lesssim a$".

Let us consider the semilinear parabolic equation \eqref{eq1-1} with homogeneous Dirichlet boundary condition and an initial configuration  $u_0\in H^2(\Omega)\cap H_0^1(\Omega)$, that is
\begin{equation}
\label{eq3-1}
\left\{\begin{split}
&u_t = D\Delta u +f(t,u), \quad\;\; \bm x \in \Omega, \;\;  0  \leq t \leq    T ,\\
&u( 0 , \bm x)=u_0 (\bm x), \qquad\quad\, \bm x \in \Omega,\\
&u(t, \bm x) = 0, \qquad \qquad\quad\; \bm x\in {\partial \Omega},\;\; 0  \leq t \leq    T ,\;\;
\end{split}
\right.
\end{equation}
where  $T\ge 0$ is the terminal time.

\subsection{Semi-discretization in space by finite element approximation}
The variational formulation of the above problem \eqref{eq3-1} is to find $u \in L^2( 0 ,   T ;H_0^1(\Omega))$ and $u_t\in L^2( 0 ,   T ; L^2(\Omega))$ such that
\begin{equation}
\label{eq3-10}
\left\{
\begin{split}
&(u_{t}, v) + a(u,v)=(f(t,u), v), \quad \quad  \forall\, v \in H_0^1(\Omega),\; 0  \leq t \leq    T ,\\
&u( 0 )=u_0,
\end{split}
\right.
\end{equation}
where the bilinear operator $a(\cdot,\cdot)$ is symmetric and defined by
\begin{equation}
\label{bilinear-form}
a(w,v)=\int_{\Omega}D\nabla w \cdot \nabla v \dd \bm x,\quad  \forall\,w, v\in H_0^1(\Omega).
\end{equation}

We now define a finite element  space $V_h$ for approximation of $H_0^1(\Omega)$. Since $\Omega\in {\mathbb R}^{d}$ is a rectangular  domain, let us assume  $\overline{\Omega}:=\prod_{i=1}^d [a_i,b_i]$. For each $i= 1,\cdots, d$, we make a uniform partition of $[a_i,b_i]$  with the subinterval size $h_i = \frac{b_i-a_i}{N_i}$, to get the nodes $x_i^j = a_i+jh_i, j=0, \cdots, N_i$ as
\[
a_i = x_i^0 <x_i^1< \cdots<x_i^{N_i}=b_i.
\]
With this regular partition, we obtain a one-dimensional continuous piecewise linear finite element space for $[a_i,b_i]$ as
\begin{eqnarray*}
	V_{h_i}^i(a_i,b_i)&:=&\left\{v\in \mathcal{C}[a_i,b_i]:\ v|_{\big[x_i^{j-1},x_i^j\big]}\in \mathbb{P}_1(\big[x_i^{j-1},x_i^j\big]),1 \leq j \leq N_i \right\}\cap H_0^1(a_i,b_i)\\
	&=&{\rm span}\left\{\phi_i^1(x_i), \cdots, \phi_i^{N_i-1}(x_i) \right\},\nonumber
\end{eqnarray*}
where $\phi_i^j(x_i)$ is the $j$-th nodal basis function of $V_h^i(a_i,b_i)$.
By using the tensor product of all above finite element spaces, we can obtain  a finite element space  for $\Omega$ as follows:
\begin{eqnarray}
\label{tensor-FES}
V_h&:=&V_{h_1}^1(a_1,b_1) \otimes \cdots \otimes V_{h_d}^d(a_d, b_d)\\
&=& {\rm span}\{ {\phi_1^{i_1}}(x_1)\cdots \phi_d^{i_d}(x_d):\ 1\le i_1\le N_1-1,\cdots,1\le i_d\le N_d-1 \}.\nonumber
\end{eqnarray}
It is evident that $V_h\subset H_0^1(\Omega)$.  Define $h=\textstyle\max_{1\le i\le d} h_i$ as the mesh size of the corresponding uniformly rectangular partition $\mathcal{T}_h$ for generating $V_h$. For the forthcoming error analysis, we assume the partition $\mathcal{T}_h$  is quasi-uniform, i.e., $h\eqsim h_i$ for all $1\le i \le d$.
The finite element approximation in space for \eqref{eq3-10} is then to find $u_h \in L^2( 0 ,   T ; V_h)$ such that
\begin{equation}
\label{eq3-11}
\left\{\begin{split}
& (u_{h,t},v_h)  + a(u_h,v_h) = (f(t,u_h),v_h), \qquad \forall\, v_h \in V_h,\; 0  \leq t \leq    T ,\\
&u_h( 0 )=P_hu_0,
\end{split}
\right.
\end{equation}
where $P_h:L^2(\Omega) \rightarrow V_h$ is the $L^2$ orthogonal projection operator. Since $\mathcal{T}_h$  is quasi-uniform, it is easy to show, using the similar arguments in \cite{BrambleXu1991}, that $P_h$ is stable with respect to $L^2$-norm or $H^1$-norm, i.e., $\|P_h u\|_0\lesssim \|u\|_0$ and  $\|P_h u\|_1\lesssim \|u\|_1$ for any $u\in H_0^1(\Omega)$.
Applying the inverse inequality for finite elements \cite{BrennerScott2008, Ciarlet1978} we know
\begin{equation}
\label{eq1-4}
a(w_h, v_h)\lesssim |w_h|_1 |v_h|_1 \lesssim h^{-2} \|w_h\|_0\|v_h\|_0,\quad  \forall\,w_h, v_h\in V_h.
\end{equation}
where the hidden constants are independent of $h$, which means $a(\cdot,\cdot)$ is a bounded bilinear form over $V_h$ with respect to $L^2$-norm. Therefore, by the Riesz representation theorem, there exists a bounded linear operator $L_h: V_h\rightarrow V_h$ such that
\begin{equation}
\label{A-h}
a(w_h,v_h)= (L_h w_h, v_h), \quad  \forall\,w_h, v_h\in V_h.
\end{equation}
Making use of the projection operator $P_h$, we can finally reformulate the problem (\ref{eq3-11}) as the following equivalent semi-discrete (in space) system:
\begin{equation}
\label{eq1-5}
\left\{
\begin{split}
&u_{h,t}+L_hu_h=P_hf(t,u_h), \quad\; \bm x \in \Omega, \;\;  0  \leq t \leq    T ,\\
&u_h( 0 )=P_h u_0, \qquad\qquad\qquad\; \bm x \in \Omega.
\end{split}
\right.
\end{equation}

\subsection{Explicit exponential integrator in time}
Let us divide the time interval $[ 0 ,   T ]$ into $N_T>0$ subintervals
$[t_n,t_{n+1}]$, $n = 0, 1, \cdots, N_T - 1,$
with $\tau_n=t_{n+1}-t_n>0$ being the time step size at $t_n$.
Let $\{e^{-tL_h}\}_{t\geq 0}$ denote  the semigroup on $V_h$  with the infinitesimal generator $(-L_h)$.
For simplicity of presentation, we define $u(t) := u(t,\cdot)$ and  $u_h(t) := u_h(t,\cdot)$.
By the Duhamel principle, the semi-discrete solution $u_h$ to the problem (\ref{eq1-5}) can be equivalently expressed as
\begin{equation}
\label{Duhamel}
u_h(t_{n+1}) = e^{-\tau_nL_h }u_h(t_n)+\int_{0}^{\tau_n} e^{-(\tau_n - \sigma)L_h}P_hf(t_n+\sigma, u_h(t_n+\sigma))\dd \sigma.
\end{equation}
Denote by $ u_h^n$  the fully-discrete numerical solution at the time step $t_n$ after temporal discretization of \eqref{Duhamel}.
We  then apply the classic  explicit exponential Runge-Kutta approach \cite{HochbruckOstermann2010} (see Table \ref{tab1}) to \eqref{Duhamel} and obtain  a fully-discrete numerical method for solving  the semilinear parobolic problem \eqref{eq3-1} as follows: for $n=0,1,\cdots, N_T-1$,
\begin{equation}
\label{eq2-2}
\left\{
\begin{split}
u_h^{n+1} &= \chi(-\tau_nL_h)u_h^n+\tau_n\textstyle\sum\limits_{i=1}^s b_i(-\tau_nL_h)G_{ni},\\
U_{ni} &= \chi_i(-\tau_nL_h)u_h^n + \tau_n\textstyle\sum\limits_{j=1}^{i-1} a_{ij}(-\tau_nL_h)G_{nj},\quad i= 1, \cdots,s,\\
G_{ni} &= P_hf(t_n+c_i\tau_n, U_{ni}),\quad i= 1, \cdots, s,
\end{split}
\right.
\end{equation}
where the integer $s>0$ denotes the number of stages for exponential Runge-Kutta method.


\begin{table}[!htbp]
	\centering
	\caption{Explicit exponential Runge-Kutta tableau for (\ref{Duhamel}) with $s$ denoting the number of stages.}
	\begin{tabular}{c|cccc|c}
		$c_1$ & & & & & $\chi_1(-\tau_n L_h)$\\	
		$c_2$ & $a_{21}(-\tau_n L_h)$ & & & & $\chi_2(-\tau_n L_h)$\\
		$\vdots$ & $\vdots$ & $\ddots$ & & & $\vdots$\\
		$c_s$ & $a_{s1}(-\tau_n L_h)$ & $\cdots$ & $a_{s, s-1}(-\tau_n L_h)$ & & $\chi_s(-\tau_n L_h)$\\
		\hline
		& $b_1(-\tau_n L_h)$ & $\cdots$ & $b_{s-1}(-\tau_n L_h)$ & $b_{s}(-\tau_n L_h)$ & $\chi(-\tau_n L_h)$
	\end{tabular}
	\label{tab1}
\end{table}

Here the interpolation nodes $c_1, \cdots, c_s$ are $s$ distinct nodes selected in $[0, 1]$,  and the weights  are given by
\begin{equation*}
\left\{
\begin{split}
b_i(-\tau_n L_h) &= \int_{0}^1 e^{-\tau_n(1-\theta)L_h}l_i(\theta)\dd \theta,\quad i= 1, \cdots, s,\\
a_{ij}(-\tau_nL_h) &= \frac{1}{\tau_n}\int_0^{c_i\tau_n}e^{-(c_i\tau_n-\tau)L_h}l_j(\tau)\dd \tau,\quad i= 1, \cdots, s,\;j=1,\cdots, i-1,
\end{split}
\right.
\end{equation*}
with $\{l_i(\theta)\}_{i=1}^s$ being the classic Lagrange interpolation polynomials
$$l_i(\theta):=\prod\limits_{m=1,m \neq i}^s \frac{\theta-c_m}{c_i-c_m}, \quad i = 1 , \cdots s. $$
The coefficients $\chi, \chi_i$ are constructed from exponential functions or approximations of such functions evaluated at the operator {$(-\tau_n L_h)$}. Let us define
$$
\varphi_k(z) = \int_0^1e^{(1-\theta)z}\frac{\theta^{k-1}}{(k-1)!}\dd\theta, \;\; k\geq 1.
$$
It is easy to check that $\varphi_0(z) =e^z$ and $\varphi_{k+1}(z) = \frac{\varphi_k(z)-\varphi_k(0)}{z}$ for $k>0$.
Then,
$$
\qquad \varphi_k(-\tau_n L_h) = \frac{1}{\tau_n^k} \int_0^{\tau_n} e^{-(\tau_n-\tau)L_h}\frac{\tau^{k-1}}{(k-1)!}\dd \tau, \;\;k\geq 1.$$

To ensure the consistency of the explicit exponential Runge-Kutta method, 
it  is always assumed that the following conditions hold \cite{HochbruckOstermann2010}:
\begin{equation}
\label{eq2-3}
\left\{
\begin{split}
\chi(-\tau_n L_h) = e^{-\tau_n L_h}, \qquad&\\
\qquad\chi_1(-\tau_n L_h) = 1, \qquad& \chi_i(-\tau_n L_h)=e^{-c_i \tau_n L_h},\quad 1 \leq i \leq s,\\
\textstyle\sum\limits_{j=1}^s b_j(-\tau_n L_h) = \varphi_1(-\tau_n L_h), \qquad & \textstyle\sum\limits_{j=1}^{i-1} a_{ij}(-\tau_n L_h) = c_i \varphi_1(-c_i\tau_n L_h), \;\; 1 \leq i \leq s.
\end{split}
\right.
\end{equation}

With the help of the above consistency condition (\ref{eq2-3}), we can find $c_1=0$ and {obtain the explicit forms of the functions $\chi$ and $\chi_i$ in (\ref{eq2-2})}. Finally, the proposed fully-discrete scheme  \eqref{eq2-2} can be rewritten as
\begin{equation}
\label{fully}
\left\{
\begin{split}
u_h^{n+1} &= e^{-\tau_nL_h}u_h^n+\tau_n\textstyle\sum\limits_{i=1}^s b_i(-\tau_nL_h)G_{ni},\\
U_{ni} &= e^{-c_i\tau_nL_h}u_h^n+\tau_n\textstyle\sum\limits_{j=1}^{i-1} a_{ij}(-\tau_nL_h)G_{nj}, \quad i= 1, \cdots,s,\\
G_{ni} &= P_hf(t_n+c_i\tau_n,U_{ni}),\quad i= 1, \cdots,s.
\end{split}
\right.
\end{equation}


We refer to the above fully-discrete scheme  \eqref{fully}  as the exponential integrator finite element (EIFE) method. In particular, if the number of  RK stages $s=1$, then  the so-called first-order Euler exponential scheme  is obtained for time stepping, and (\ref{fully}) can correspondingly be expressed as
\begin{equation}
\label{Euler}
u_h^{n+1} = e^{-\tau_n L_h}u_h^n + \tau_n\varphi_1(-\tau_nL_h)P_hf(t_n,u_h^n),
\end{equation}
which is called EIFE1.
If the number of  RK stages $s=2$, then the two interpolation nodes are taken as $c_1=0$ and $c_2 \in (0, 1]$ and the
two-stage second-order exponential Runge-Kutta scheme is obatined for time stepping,   and (\ref{fully}) correspondingly reads
\begin{equation}
\label{second-order}
\begin{split}
&u_h^{n+1} =\; e^{-\tau_nL_h}u_h^n+\tau_n\big((\varphi_1(-\tau_nL_h)-\frac{1}{c_2}\varphi_2(-\tau_nL_h))P_hf(t_n,u_h^n)\\
&+\frac{1}{c_2}\varphi_2(-\tau_nL_h)P_hf(t_n+c_2\tau_n,e^{-c_2\tau_nL_h}u_h^n+c_2\tau_n\varphi_1(-c_2\tau_nL_h)P_hf(t_n,u_h^n)) \big),
\end{split}
\end{equation}
which is denoted as EIFE2.

Fully-discrete error analysis of \eqref{Euler} and \eqref{second-order} will be carefully studied in Section \ref{semilinear_theory}.
For even higher order ($\geq 3$) explicit exponential Runge-Kutta schemes, more complicated order conditions are needed and we can refer to \cite{HochbruckOstermann2010}  for details.
It is worth noting that the proposed EIFE method is so far described in an abstract setting for the convenience of further theoretical analysis. If we express the finite element numerical solution in terms of nodal basis functions of $V_h$, we can further reformulate the equations \eqref{eq3-11} and \eqref{eq1-5} in the matrix form. More remarkably, the resulting mass and coefficient matrices can be diagonalized simultaneously with an orthogonal matrix and  the product of the $e^{-\tau_n L_h}$ with any vector consequently can be realized efficiently using  tensor product spectral decomposition and FFT, which  render a fast  implementation for the
EIFE method \eqref{fully} as described in Section \ref{fastimpl}.

\section{Error analysis of the EIFE method}\label{semilinear_theory}

In this section, we focus on error estimates of fully-discrete solutions produced by the EIFE method \eqref{fully}  for solving the semilinear parabolic problem \eqref{eq3-1} (i.e., the case with homogeneous Dirichlet boundary condition). Note that the analysis also work for the problem with nonhomogeneous Dirichlet boundary condition after minor modifications. From now on, we always assume the  dimension of the problem $d\leq 3$.

\subsection{Some preliminary lemmas}
We  first present some results related to  the semigroup $\{e^{-tL_h}\}_{t\geq 0}$ and some terms used in the exponential Runge-Kutta schemes, which are important to the forthcoming analysis of the proposed EIFE method.

\begin{lemma}
	\label{lemma1}
	\begin{itemize}
		\item[\rm (i)] For any given  parameter $\gamma\geq 0$, it holds
		\begin{equation}
		\label{eq1-8}
		\|e^{-\tau L_h}\|_0 + \|\tau^{\gamma}L_h^{\gamma}e^{-\tau L_h}\|_0 \lesssim 1, \quad \forall\,\tau > 0,\, \forall\, h>0.
		\end{equation}
		\item[\rm (ii)]  For any given parameter $0 \leq \gamma \leq 1$, it holds
		\begin{equation}\label{555}
		\Big\|\tau L_h^{\gamma} \textstyle\sum\limits_{j=1}^{n-1}e^{-j\tau L_h}\Big\|_0 \lesssim 1, \quad\forall\,\tau > 0,\, \forall\, h>0.
		\end{equation}
		\item[\rm (iii)] For any given parameter $0
		\leq\gamma\leq 1$, it holds
		\begin{equation}\label{666}
		\|\phi(-\tau L_h)\|_0+\|\tau^{\gamma}L_h^{\gamma}\phi(-\tau L_h)\|_0 \lesssim 1,\quad \forall\,\tau > 0,\, \forall\, h>0,
		\end{equation}
		where $\phi(-\tau L_h)=b_i(-\tau L_h)$ or $\phi(-\tau L_h)=a_{ij}(-\tau L_h)$,  $i,j=1, \cdots, s$.
	\end{itemize}
\end{lemma}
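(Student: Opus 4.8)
The plan is to analyze the operator $L_h$ on the finite-dimensional space $V_h$ via its spectral decomposition, since $L_h$ is self-adjoint and positive definite with respect to the $L^2$-inner product on $V_h$ (this follows from the symmetry and coercivity of $a(\cdot,\cdot)$ together with \eqref{A-h}). Writing $\{(\lambda_k,\psi_k)\}$ for the eigenpairs of $L_h$, with $0<\lambda_1\le\lambda_2\le\cdots$, any function $w(L_h)$ built from a scalar function $w$ has $\|w(L_h)\|_0 = \max_k |w(\lambda_k)|$. Thus each bound reduces to a uniform (in $\tau$ and $h$) estimate of a scalar quantity over the range $\lambda\in(0,\infty)$; crucially the estimates must not depend on the spectral gap or on $\lambda_1$, so I would prove them as bounds on $\sup_{\lambda>0}$ of the relevant scalar expression.

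For part (i), the two scalar functions are $e^{-\tau\lambda}$, trivially bounded by $1$ for $\tau,\lambda>0$, and $g(x):=x^{\gamma}e^{-x}$ with $x=\tau\lambda>0$. The function $g$ is continuous on $(0,\infty)$, tends to $0$ at both ends (for $\gamma\ge0$), and hence attains a finite maximum $\gamma^{\gamma}e^{-\gamma}$ (interpreting $0^0=1$ when $\gamma=0$); this gives \eqref{eq1-8} with a constant depending only on $\gamma$. For part (iii), I would use the integral representations of $b_i$ and $a_{ij}$: since $|l_i(\theta)|\le C$ on $[0,1]$, we have $\|b_i(-\tau L_h)\|_0\le C\int_0^1\|e^{-\tau(1-\theta)L_h}\|_0\dd\theta\le C$ by part (i), and likewise for the weighted version, $\|\tau^{\gamma}L_h^{\gamma}b_i(-\tau L_h)\|_0\le C\int_0^1 (1-\theta)^{-\gamma}\|(\tau(1-\theta))^{\gamma}L_h^{\gamma}e^{-\tau(1-\theta)L_h}\|_0\dd\theta \le C\int_0^1(1-\theta)^{-\gamma}\dd\theta$, which is finite precisely because $0\le\gamma\le1$ (in fact $\gamma<1$; the endpoint $\gamma=1$ needs the slightly finer bound $\|\tau L_h e^{-s\tau L_h}\|_0\lesssim s^{-1}$ combined with $\int_0^1 l_i$ having a zero structure, or one simply notes $\varphi_k$-type cancellations). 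The $a_{ij}$ terms are handled identically after the change of variables $\tau\mapsto c_i\tau\theta$, using $c_i\in[0,1]$ to absorb constants.

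Part (ii) is the one I expect to be the main obstacle, because it involves a sum of $n-1$ terms and $n$ (equivalently $n\tau$) is unbounded, so naive term-by-term estimation fails. The idea is again spectral: the scalar quantity is $S(\lambda):=\tau\lambda^{\gamma}\sum_{j=1}^{n-1}e^{-j\tau\lambda} = \tau\lambda^{\gamma}\,\frac{e^{-\tau\lambda}(1-e^{-(n-1)\tau\lambda})}{1-e^{-\tau\lambda}}$. Bounding $1-e^{-(n-1)\tau\lambda}\le 1$ and using $1-e^{-x}\ge \frac{x}{1+x}$ (or $\ge c\min(x,1)$) for $x=\tau\lambda>0$, one gets $S(\lambda)\le \tau\lambda^{\gamma} e^{-\tau\lambda}\cdot\frac{1+\tau\lambda}{\tau\lambda} = \lambda^{\gamma-1}e^{-\tau\lambda}(1+\tau\lambda) = \tau^{1-\gamma}\big(x^{\gamma-1}+x^{\gamma}\big)e^{-x}$ where $x=\tau\lambda$. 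For $\gamma=1$ this is $(1+x)e^{-x}\le C$, done. For $0\le\gamma<1$ there is an apparent factor $\tau^{1-\gamma}$, which seems to blow up as $\tau$ grows — this is the subtle point. The resolution is that $x=\tau\lambda$ cannot be small when $\tau$ is large unless $\lambda$ is small, and for the dominant regime one instead bounds $\sum_{j\ge1}e^{-j\tau\lambda}\le \int_0^\infty e^{-t\tau\lambda}\dd t = (\tau\lambda)^{-1}$, giving $S(\lambda)\le \lambda^{\gamma-1}\le$ ??? — which again is not uniform. So the correct route is to compare the sum to the integral $\int_0^\infty \lambda^{\gamma} e^{-s\lambda}\dd s$ after recognizing $\tau\sum_j(\cdot)$ as a Riemann sum: precisely, $\tau\sum_{j=1}^{n-1}e^{-j\tau\lambda}\le \tau\sum_{j=1}^{\infty}e^{-j\tau\lambda}$, and by monotonicity of $s\mapsto e^{-s\lambda}$, $\tau e^{-j\tau\lambda}\le\int_{(j-1)\tau}^{j\tau}e^{-s\lambda}\dd s$, whence $\tau\sum_{j\ge1}e^{-j\tau\lambda}\le\int_0^\infty e^{-s\lambda}\dd s=\lambda^{-1}$, so $\|\tau L_h^{\gamma}\sum_j e^{-j\tau L_h}\|_0\le \|L_h^{\gamma-1}\|_0$ — finite for each $h$ but we need $h$-uniformity, so this still must be combined with the fact that we only ever need $\gamma$ strictly between $0$ and $1$ and that the true statement couples with the $e^{-\tau\lambda}$ already-present factor; I would look to \cite{HochbruckOstermann2005b} for the precise discrete stability argument. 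The honest plan: split the spectrum at $\tau\lambda=1$. For $\tau\lambda\ge1$, $\sum_{j\ge1}e^{-j\tau\lambda}\le \frac{e^{-\tau\lambda}}{1-e^{-1}}$ so $\tau\lambda^{\gamma}\sum\le C\tau\lambda^{\gamma}e^{-\tau\lambda}=C\tau^{1-\gamma}x^{\gamma}e^{-x}\le C x^{\gamma-1}\cdot x\, \tau^{-\gamma}\cdot\tau \le \cdots$; for $\tau\lambda<1$, bound $N:=n-1$ terms each by $\le1$ giving $\tau\lambda^{\gamma}(n-1)\le (n-1)\tau\cdot\lambda^{\gamma}\le T\lambda^{\gamma}$, and here $\lambda^{\gamma}\le (1/\tau)^{\gamma}$... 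The cleanest correct statement, and the one I would ultimately commit to, is that \eqref{555} holds because $\tau\sum_{j=1}^{n-1}e^{-j\tau\lambda}$ is bounded by $\min\{(n-1)\tau,\ C\lambda^{-1}\}\le\min\{T, C\lambda^{-1}\}$ and then $\lambda^{\gamma}\min\{T,C\lambda^{-1}\}\le C(T)\,(\text{bounded})$ since $\lambda^{\gamma}\cdot C\lambda^{-1}=C\lambda^{\gamma-1}$ is bounded for $\lambda\ge\lambda_*$ and $\lambda^{\gamma}\cdot T$ is bounded for $\lambda\le\lambda_*$ only if $\lambda_*$ is fixed — hence one genuinely uses $0\le\gamma\le1$ together with the interpolation $\lambda^{\gamma-1}=(\lambda^{-1})^{1-\gamma}\le$ combination, i.e. $\lambda^{\gamma}\min\{T,C/\lambda\}\le (\lambda^{\gamma}T)^{1-\gamma}$... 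In short, the mechanics of part (ii) require care and I would follow the template of \cite{HochbruckOstermann2005b}; the remaining parts are routine spectral-function estimates once part (i) is in hand.
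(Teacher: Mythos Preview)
Your treatment of part (i) is correct and essentially identical to the paper's: spectral calculus plus the scalar bound $x^{\gamma}e^{-x}\le \gamma^{\gamma}e^{-\gamma}$.

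For part (ii), however, you have a genuine gap. You correctly reach the scalar quantity $\tau\lambda^{\gamma}\sum_{j=1}^{n-1}e^{-j\tau\lambda}$ and, by summing the geometric series, arrive at a bound of the form $\lambda^{\gamma-1}\cdot\dfrac{\tau\lambda}{e^{\tau\lambda}-1}$. You then discard this because ``$\lambda^{\gamma-1}$ is not uniform'' and embark on a long and inconclusive case analysis. The point you are missing is exactly the one the paper uses: by the Poincar\'e inequality there is an $h$-independent constant $\alpha>0$ with $a(v_h,v_h)\ge\alpha\|v_h\|_0^2$ for all $v_h\in V_h$, hence every eigenvalue of $L_h$ satisfies $\lambda\ge\alpha$. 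Since $0\le\gamma\le1$ this gives $\lambda^{\gamma-1}\le\alpha^{\gamma-1}$ uniformly in $h$, and since $x/(e^{x}-1)\le1$ for $x>0$ the whole expression is $\lesssim1$. That is the entire argument; no splitting, no comparison with Riemann sums, and no dependence on $T$ is needed.

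For part (iii), your triangle-inequality approach $\|\tau^{\gamma}L_h^{\gamma}b_i(-\tau L_h)\|_0\le C\int_0^1(1-\theta)^{-\gamma}\dd\theta$ works for $0\le\gamma<1$ but, as you note, fails at $\gamma=1$ because the integral diverges. The paper sidesteps this by staying inside the spectral calculus and integrating \emph{before} bounding: for each eigenvalue $\lambda$,
\[
\tau^{\gamma}\lambda^{\gamma}\int_0^1 e^{-\tau(1-\theta)\lambda}\,l_i(\theta)\,\dd\theta
\ \lesssim\ (\tau\lambda)^{\gamma}\int_0^1 e^{-\tau(1-\theta)\lambda}\,\dd\theta
\ =\ (\tau\lambda)^{\gamma-1}\bigl(1-e^{-\tau\lambda}\bigr),
\]
and then one checks that $g(x)=x^{\gamma-1}(1-e^{-x})$ is bounded on $(0,\infty)$ for every $0\le\gamma\le1$ (for $\gamma=1$ it is simply $1-e^{-x}\le1$). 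This is the cleaner route and covers the endpoint without any additional cancellation argument.
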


\begin{proof}
	(i) Since $a(\cdot,\cdot)$ is a symmetric bilinear operator, we easily know $L_h$ is a linear symmetric operator on $V_h$. On the other hand, by the Poincar\'{e} inequality, there exists a constant $\alpha>0$ such that
	\[
	\alpha \|v_h\|_0^2\le a(v_h,v_h),\quad \forall\,v_h\in V_h.
	\]
	This combined with \eqref{eq1-4} gives $0\leq \alpha \leq \lambda \lesssim h^{-2}$ for all $\lambda\in \lambda(L_h)$ by means of Rayleigh representation theorem, where $\lambda(L_h)$ denotes the set of all eigenvalues of $L_h$. Therefore, we obtain
	\begin{equation}
	\label{semigroup-1}
	\|e^{-\tau L_h }\|_{0}\leq e^{-\alpha \tau} < 1.
	\end{equation}
	On the other hand, $L_h$ is a symmetric operator on $V_h$, so is $\tau^{\gamma}L_h^{\gamma}e^{-\tau L_h}$. Thus it holds
	\begin{equation}
	\label{semigroup-2}
	\|\tau ^{\gamma}L_h^{\gamma}e^{-\tau L_h}\|_0=\textstyle\max_{\lambda \in \lambda(L_h)} |(\tau \lambda)^{\gamma}e^{-\tau\lambda}|.
	\end{equation}	
	Let us consider an auxiliary function
	$
	g(x)=x^{\gamma}e^{-x}$ for $x\ge 0$.
	The derivative with respect to $x$ is $g^{\prime}(x)=x^{\gamma-1}e^{-x}(\gamma-x)$, so the maximum of $g(x)$ is taken at $x=\gamma$, which implies
	\begin{equation}\label{ttt}
	g(x)\le \gamma^{\gamma}e^{-\gamma},\quad  \forall\,x\ge 0.
	\end{equation}
	Combination of \eqref{ttt} with \eqref{semigroup-2}  immediately gives us
	$
	\|\tau ^{\gamma}L_h^{\gamma}e^{-\tau L_h}\|_0\le \gamma^{\gamma}e^{-\gamma},
	$
	which together with  \eqref{semigroup-1} then directly deduces  \eqref{eq1-8}.
	
	(ii) Follow the similar arguments for deriving \eqref{semigroup-2},
	\begin{equation*}
	\begin{split}
	\Big\|\tau L_h^{\gamma} \textstyle\sum\limits_{j=1}^{n-1} e^{-j\tau L_h}\Big\|_0 &
	=\textstyle\max\limits_{\lambda \in \lambda(L_h)}\Big|\tau \lambda^{\gamma} \textstyle\sum\limits_{j=1}^{n-1} e^{-j\tau \lambda}\Big|
	\leq \textstyle\max\limits_{\lambda \in \lambda(L_h)} \lambda^{\gamma -1}\Big|\frac{\tau \lambda}{e^{\tau \lambda}-1}\Big|.
	\end{split}
	\end{equation*}
	Since the auxiliary function $g(x) = \Big|\frac{x}{e^{x}-1}\Big|\leq 1$ for any $x>0$, $0\leq \gamma \leq 1$, we have
	\begin{equation}
	\Big\|\tau L_h^{\gamma} \textstyle\sum\limits_{j=1}^{n-1}e^{-j\tau L_h}\Big\|_0 \lesssim  1.
	\end{equation}
	
	(iii) If $\phi(x) = b_i(x)$ for $1\leq i\leq s$, then
	\begin{equation*}
	\begin{split}
	\|\phi(-\tau L_h)\|_0 &= \Big\|\int_0^1 e^{-\tau(1-\theta) L_h}l_i(\theta)\dd \theta\Big\|_0
	\leq \int_0^1 \|e^{-\tau(1-\theta)L_h}\|_0 |l_i(\theta)|\dd \theta \lesssim 1,
	\end{split}
	\end{equation*}
	since the interpolation nodes $c_1, \cdots, c_s$ are  fixed.	
	For all $\gamma\in [0,1]$, follow the similar arguments, we have
	\begin{equation}\label{sss}
	\begin{split}
	\|\tau^{\gamma}L_h^{\gamma}\phi(-\tau L_h)\|_0  &= \textstyle\max\limits_{\lambda \in \lambda(L_h)}\Big|\tau^{\gamma}\lambda^{\gamma}\int_0^1 e^{-\tau(1-\theta)\lambda}l_i(\theta)\dd \theta\Big|\\
	&\lesssim \textstyle\max\limits_{\lambda \in \lambda(L_h)} \Big|\tau^{\gamma}\lambda^{\gamma}\int_0^1 e^{-\tau(1-\theta)\lambda}\dd \theta\Big|=\textstyle\max\limits_{\lambda \in \lambda(L_h)} (\tau \lambda)^{\gamma-1}(1-e^{-\tau\lambda}).
	\end{split}
	\end{equation}
	Let us consider the auxiliary function $g(x) = x^{\gamma - 1}(1-e^{-x})$ for $x > 0$ where $0\leq \gamma\leq 1$.  Since $\lim\limits_{x \to 0^+}f(x)=0$ and $\lim\limits_{x\to +\infty} f(x)=0$, the function $f(x)$ must take its maximum at a point $x_0\in (0, +\infty)$, and  $f^{\prime}(x_0)=0$ by the Fermat lemma.  Note that
	\[
	f^{\prime}(x)=x^{\gamma-2}e^{-x}((\gamma-1)(e^x-1)+x),
	\]
	which implies that $(\gamma-1)(e^{x_0}-1)+x_0=0$, and $x_0$ depends only on $\gamma$. Hence, $f(x)$ is bounded by a constant $C(\gamma)>0$. This combined with the above estimate \eqref{sss} gives
	\begin{equation}
	\|\tau^{\gamma}L_h^{\gamma}\phi(-\tau L_h)\|_0 \lesssim 1.
	\end{equation}
	Similarly,the above results are also held for $\phi(x) = a_{ij}(x)$, $i,j=1, \cdots, s$.
\end{proof}

{In addition, recalling the definitions \eqref{bilinear-form} and \eqref{A-h} and noting that $L_h$ is a symmetric positive definite operator, we have the following important property:
	\begin{equation}
	\label{norm-relation}
	\|v\|_1\eqsim \|L_h^{\frac 1 2 } v\|_0,\quad \forall \ v\in V_h,
	\end{equation}
	which establishes an important relation between the $H^1$ and $L^2$ norms over $V_h$, and will be frequently used later on.}
Next let us introduce the mild growth condition for the function $f$ as given in \cite{ThomeeVidar2006} and some regularity conditions required for the exact solution $u(t)$ in order  to carry out convergence and error analysis of EIFE method.

\begin{assumption}
	\label{assumption2}
	The function $f(t,\zeta) $  grows mildly with respect to $\zeta$, i.e., there exists a number  $p>0$  for $d = 1, 2$ or $p\in (0, 2]$ for $d=3$  such that
	\begin{equation}
	\label{assumption_eq}
	\Big|\frac{\partial f}{\partial \zeta}(t, \zeta)\Big| \lesssim 1+|\zeta|^p,\quad \forall\,t,\, \zeta \in \mathbb{R}.
	\end{equation}
\end{assumption}
\begin{assumption}
	\label{assumption3}
	The function $f(t,\zeta)$  is sufficiently smooth with respect to $t$, i.e., for any given constant $K>0$, it holds
	\begin{equation}
	\label{assumption_eq2}	
	\textstyle\sum\limits_{|\bm\alpha| \leq 2}\Big|D^{\bm\alpha}f(t,\zeta)  \Big| \lesssim 1, \quad \forall\,t \in [ 0 ,    T ],\zeta \in [-K,K].
	\end{equation}
\end{assumption}

\begin{assumption}
	\label{assumption4}
	The exact solution $u(t)$ satisfies some of the following regularity conditions:
	\begin{subequations}\label{strong-regularity}
		\begin{align}
		\textstyle\sup\limits_{ 0  \leq t \leq    T } \|u(t)\|_{2, \Omega} &\lesssim 1,\label{strong-regularity2}\\
		\textstyle\sup\limits_{ 0  \leq t \leq    T } \|u_t(t)\|_{0, \infty, \Omega} &\lesssim 1, \label{strong-regularity1}\\
		\textstyle\sup\limits_{ 0  \leq t \leq    T } \|u_{tt}(t)\|_{0, \infty, \Omega} &\lesssim 1,\label{strong-regularity3}
		\end{align}
		where the hidden constants may depend on $T$.
	\end{subequations}
\end{assumption}

We then have the following result on the locally-Lipschitz continuity of $f$.
\begin{lemma}
	\label{lemma3}
	Suppose that the function $f$  satisfies Assumption \ref{assumption2}, and the exact solution $u(t)$ fulfills \eqref{strong-regularity2} in Assumptions \ref{assumption4}. Then $f$ is locally-Lipschitz continuous in a strip along the exact solution $u(t)$, i.e., for any given constant $R >0$,
	\begin{equation}\label{rrr}
	\|f(t, v)-f(t, w)\|_0 \lesssim \|v-w\|_1,
	\end{equation}
	for  any $ t\in [ 0 ,    T ]$ and  $v, w\in V_h$ satisfying
	\begin{equation*}
	\label{growth-assumption}
	\textstyle\max\{\|v-u(t)\|_1, \|w-u(t)\|_1 \} \leq R.
	\end{equation*}
\end{lemma}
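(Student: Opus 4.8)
The plan is to exploit the fundamental theorem of calculus in the $\zeta$ variable together with the growth bound from Assumption \ref{assumption2} and the Sobolev embedding available in dimensions $d\le 3$. First I would write, for fixed $t$ and points $v,w\in V_h$,
\[
f(t,v(\bm x))-f(t,w(\bm x)) = \Big(\int_0^1 \frac{\partial f}{\partial\zeta}\big(t,\, w(\bm x)+\theta(v(\bm x)-w(\bm x))\big)\dd\theta\Big)\,(v(\bm x)-w(\bm x)),
\]
so that pointwise $|f(t,v)-f(t,w)| \lesssim \big(1 + |v|^p + |w|^p\big)\,|v-w|$ using \eqref{assumption_eq} and the elementary inequality $|a+\theta(b-a)|^p \lesssim |a|^p+|b|^p$. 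Taking the $L^2(\Omega)$ norm and applying H\"older's inequality then gives
\[
\|f(t,v)-f(t,w)\|_0 \lesssim \big\|1+|v|^p+|w|^p\big\|_{L^{q}}\,\|v-w\|_{L^{r}}
\]
for a conjugate pair $\tfrac1q+\tfrac1r=\tfrac12$ to be chosen.

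Next I would convert the two factors into $H^1$ norms. For the second factor, the Sobolev embedding $H^1(\Omega)\hookrightarrow L^{r}(\Omega)$ holds for any $r<\infty$ when $d\le 2$ and for $r\le 6$ when $d=3$, so $\|v-w\|_{L^r}\lesssim \|v-w\|_1$. For the first factor, I would use the hypothesis: since $\|v-u(t)\|_1\le R$ and $\sup_t\|u(t)\|_2\lesssim 1$ (hence $\sup_t\|u(t)\|_1\lesssim 1$), we get $\|v\|_1 \le R + C$ uniformly, and likewise for $w$; then $\|\,|v|^p\|_{L^q} = \|v\|_{L^{pq}}^p \lesssim \|v\|_1^p \lesssim (R+C)^p$ provided $pq$ is an admissible Sobolev exponent, and similarly for $w$. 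The constant $1$ contributes $\|1\|_{L^q}=|\Omega|^{1/q}$, a fixed constant. Collecting these bounds yields $\|f(t,v)-f(t,w)\|_0 \lesssim (1+R^p)\|v-w\|_1 \lesssim \|v-w\|_1$, with the hidden constant depending on $R$, $p$, $\Omega$ and $\sup_t\|u(t)\|_2$, which is exactly \eqref{rrr}.

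The one point requiring care — the main (if modest) obstacle — is verifying that the exponents can actually be chosen compatibly in the borderline case $d=3$. Here Assumption \ref{assumption2} restricts $p\in(0,2]$; I would pick $r=6$ (so $\|v-w\|_{L^6}\lesssim\|v-w\|_1$ is the endpoint Sobolev embedding) which forces $q=3$, and then I need $pq=3p\le 6$, i.e. $p\le 2$ — precisely the stated range. Thus the restriction on $p$ in Assumption \ref{assumption2} is exactly what makes the H\"older/Sobolev bookkeeping close, and no regularity of $u$ beyond \eqref{strong-regularity2} is needed (it enters only through the uniform $H^1$ bound on the strip). For $d=1,2$ any $p>0$ works since $H^1$ embeds into every $L^r$, $r<\infty$, so the argument is even easier. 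I would also note in passing that $v,w\in V_h\subset H_0^1(\Omega)$ are genuine $H^1$ functions, so all the embeddings apply verbatim.
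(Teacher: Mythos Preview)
Your proposal is correct and follows essentially the same approach as the paper's proof: both use the mean value theorem (the paper uses the Lagrange form, you the integral form), bound $|\partial_\zeta f|$ via Assumption~\ref{assumption2}, and then close with H\"older's inequality plus Sobolev embedding, using the strip hypothesis together with \eqref{strong-regularity2} to get a uniform $H^1$ bound on $v$ and $w$. The only cosmetic difference is that the paper squares first and applies H\"older to $\int |v|^{2p}|v-w|^2$ with conjugate exponents $q_1,q_2$, whereas you apply H\"older directly with $\tfrac1q+\tfrac1r=\tfrac12$; the resulting exponent constraints in $d=3$ are equivalent and both reduce to $p\le 2$.
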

\begin{proof}
	It follows from \eqref{strong-regularity2} and \eqref{growth-assumption} that
	\begin{equation}
	\label{bound}
	\textstyle\max\{\|v\|_1,\|w\|_1\}\le R+\|u(t)\|_1\lesssim 1+R.
	\end{equation}
	
	On the other hand, by the Lagrange mean value theorem and \eqref{assumption_eq},
	\begin{equation} \label{eq4-10}
	\begin{split}
	\|f(t, v)-f(t,w)\|_0^2 &= \Big\|\frac{\partial f}{\partial u}(t, \xi)(v-w)\Big\|_0^2\leq \int_{\Omega}(1+|\xi|^p)^2|v-w|^2\dd \bm x
	\\
	&\leq \int_{\Omega}(1+|v|^p)^2|v-w|^2\dd \bm x+\int_{\Omega}(1+|w|^p)^2|v-w|^2\dd \bm x,
	\end{split}
	\end{equation}
	where $\xi(\bm x) = \theta(\bm x)v(\bm x)+(1-\theta(\bm x))w(\bm x)$ for some $\theta(\bm x)\in [0, 1]$. It is clear that
	\begin{equation}
	\label{eq4-11}
	\int_{\Omega}(1+|v|^p)^2|v-w|^2\dd  \bm x= \int_{\Omega} |v|^{2p}|v-w|^2 \dd  \bm x + 2\int_{\Omega} |v|^p|v-w|^2 \dd  \bm x + \int_{\Omega} |v-w|^2 \dd  \bm x.
	\end{equation}
	
	It suffices to show  below the bound of the first term in the right-hand side of (\ref{eq4-11}) since the other two terms can be treated similarly.
	
	Case I: $d = 1$ or $2$. In this case, $0 < p < \infty$. We choose  $q_1, q_2$ satisfying
	\begin{equation*}
	\frac{1}{q_1} + \frac{1}{q_2} = 1, \quad 1 \leq q_1 < \infty, \quad 1 \leq q_2 < \infty, \quad 2pq_1 \geq 1.
	\end{equation*}
	Then by using Holder's inequality, Sobolev embedding theorem and \eqref{bound},
	\begin{equation}\label{2-d}
	\begin{split}
	\int_{\Omega} |v|^{2p}|v-w|^2 \dd x
	&\leq  \Big(\int_{\Omega} |v|^{2pq_1} \dd  \bm x \Big)^{\frac{1}{q_1}}\Big(\int_{\Omega}|v-w|^{2q_2}\dd  \bm x \Big)^{\frac{1}{q_2}} 
	\\
	&=\|v\|_{L^{2pq_1}}^{2p}\|v-w\|_{L^{2q_2}}^2
	\lesssim \|v\|_1^{2p}\|v-w\|_1^2\lesssim \|v-w\|_1^2.
	\end{split}
	\end{equation}
	
	Case II: $d=3$. In this case, $0 < p \leq 2$. In view of the Sobolev embedding theorem, $H^1(\Omega) \hookrightarrow L^q(\Omega)$, where $1 \leq q \leq 6$. We select appropriate $q_1, q_2$ satisfying
	\begin{equation*}
	\frac{1}{q_1}+\frac{1}{q_2} = 1, \quad 1 \leq q_1 < \infty, \quad 1 \leq q_2 \leq 3, \quad 1 \leq 2pq_1 \leq 6.
	\end{equation*}
	In fact, these conditions hold if $(q_1,q_2)$ satisfies the conditions
	\begin{equation*}
	\textstyle\max\Big\{ \frac 3 2, \frac{1}{2p} \Big\}\leq q_1 \leq \frac 3 p, \quad 1 \leq q_2 \leq 3.
	\end{equation*}
	Noting $0 < p\leq 2$,  we can find the existence of such a pair $(q_1,q_2)$ in terms of the above conditions. Follow the similar arguments for deriving \eqref{2-d}, we have
	\begin{equation}\label{3-d}
	\begin{split}
	\int_{\Omega} |v|^{2p}|v-w|^2 \dd  \bm x &\leq \Big(\int_{\Omega} |v-w|^{2q_2}\dd  \bm x \Big)^{\frac{1}{q_2}}\Big(\int_{\Omega} |v|^{2pq_1}\dd \bm x \Big)^{\frac{1}{q_1}}
	\\
	&= \|v-w\|_{L^{2q_2}}^2 \|v\|_{L^{2pq_1}}^{2p}\lesssim \|v-w\|_1^2 \|v\|_1^{2p} \lesssim \|v-w\|_1^2.
	\end{split}
	\end{equation}
	
	Based on \eqref{2-d}, \eqref{3-d} and \eqref{eq4-11}, we obtain that
	\begin{equation}\label{dddv}
	\int_{\Omega}(1+|v|^p)^2|v-w|^2\dd  \bm x\lesssim \|v-w\|_1^2,
	\end{equation}
	and similarly
	\begin{equation}\label{dddw}
	\int_{\Omega}(1+|w|^p)^2|v-w|^2\dd  \bm x\lesssim \|v-w\|_1^2,
	\end{equation}
	Finally, the combination of \eqref{eq4-10}, \eqref{dddv} and \eqref{dddw} leads to \eqref{rrr}.  %
\end{proof}

\subsection{Fully-discrete error estimates}
{In the rest of this section, for simplicity we will assume the time partition is uniform, i.e., $\Delta t=\tau_0=\cdots=\tau_{N_T-1}$ and $t_n = n\Delta t$.} Let $u_h(t)$ be the solution of the  semi-discrete (in space) problem \eqref{eq1-5} (or  \eqref{Duhamel}), and $\{u_h^n\}$ the fully-discrete solution produced by the EIFE method \eqref{fully}. For  the error between the exact solution  $u(t)$ and  the fully-discrete solution  $\{u_h^n\}$ measured in the $H^1$-norm, we have by the triangle inequality
\begin{equation}\label{errtri}
\|u(t_n)-u_h^n\|_1 \leq \|u(t_n)-u_h(t_n)\|_1 + \|u_h(t_n)-u_h^n\|_1.
\end{equation}
Thus we will estimate  $\|u(t_n)-u_h(t_n)\|_1$ and  $\|u_h(t_n)-u_h^n\|_1$ respectively.

First of all, the following result  readily comes from Theorem 14.2 in \cite{ThomeeVidar2006}.
\begin{lemma}
	\label{semi-estimate}
	Suppose that the function $f$ satisfies Assumptions \ref{assumption2} and  \ref{assumption3}, and the exact solution $u(t)$ fulfills (\ref{strong-regularity2}) in Assumptions \ref{assumption4}.  There exists a constant $h_0>0$ such that if the spatial mesh size $h \leq h_0$, then
	\begin{equation}
	\label{semi-discrete1}
	\|u_h(t)-u(t)\|_1 \lesssim h, \quad \forall\,t \in [ 0 ,    T ],
	\end{equation}
	where  the hidden constant is independent of $h$.
\end{lemma}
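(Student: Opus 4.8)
Since the assertion is quoted from Theorem 14.2 of \cite{ThomeeVidar2006}, the quickest route is to verify that its hypotheses hold in the present setting: Assumptions \ref{assumption2} and \ref{assumption3} supply the growth and smoothness of $f$, \eqref{strong-regularity2} supplies the spatial regularity of $u$, and quasi-uniformity of $\mathcal T_h$ supplies the finite element approximation properties; the conclusion then follows directly. For completeness I sketch the self-contained argument that underlies that theorem, namely the standard elliptic-projection energy estimate combined with a continuation argument for the nonlinearity.

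\emph{Step 1 (elliptic projection splitting).}
Introduce the Ritz projection $R_h\colon H_0^1(\Omega)\to V_h$ determined by $a(R_hw-w,v_h)=0$ for all $v_h\in V_h$. Because $\Omega$ is a rectangle (hence convex) and $\mathcal T_h$ is quasi-uniform, elliptic finite element theory gives $|R_hw|_1\le|w|_1$ and the approximation bounds $\|R_hw-w\|_0\lesssim h^2\|w\|_2$ and $\|R_hw-w\|_1\lesssim h\|w\|_2$. Split the error as
\[
u_h(t)-u(t)=\theta(t)+\rho(t),\qquad \rho:=R_hu-u,\quad \theta:=u_h-R_hu\in V_h .
\]
By \eqref{strong-regularity2} one has $\sup_{0\le t\le T}\|\rho(t)\|_1\lesssim h$, so it remains to prove $\sup_{0\le t\le T}\|\theta(t)\|_1\lesssim h$.

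\emph{Step 2 (error equation and energy estimate).}
Since $u$ solves \eqref{eq3-1} and $V_h\subset H_0^1(\Omega)$, $u$ satisfies $(u_t,v_h)+a(u,v_h)=(f(t,u),v_h)$ for every $v_h\in V_h$; subtracting this from \eqref{eq3-11} and using $a(\rho,v_h)=0$ yields, for all $v_h\in V_h$,
\[
(\theta_t,v_h)+a(\theta,v_h)=\bigl(f(t,u_h)-f(t,u),v_h\bigr)-(\rho_t,v_h),
\]
that is, $\theta_t+L_h\theta=P_h\bigl(f(t,u_h)-f(t,u)\bigr)-P_h\rho_t$ in $V_h$, with $\theta(0)=P_hu_0-R_hu_0$ satisfying $\|\theta(0)\|_1\lesssim h$. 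Testing this identity with $v_h=\theta_t$ and using Young's inequality gives $\tfrac{d}{dt}a(\theta,\theta)\lesssim\|f(t,u_h)-f(t,u)\|_0^2+\|\rho_t\|_0^2$; by the local Lipschitz estimate of Lemma \ref{lemma3},
\[
\|f(t,u_h)-f(t,u)\|_0\lesssim\|u_h-u\|_1\le\|\theta\|_1+\|\rho\|_1\lesssim\|\theta\|_1+h .
\]
Using $\|\theta\|_1^2\eqsim a(\theta,\theta)$ (from \eqref{norm-relation}, or the Poincar\'e inequality) and the fact that the projection-error terms $\|\rho\|_1$ and $\|\rho_t\|_0$ are $O(h)$ by finite element approximation theory, this reduces to a Gr\"onwall inequality $\tfrac{d}{dt}a(\theta,\theta)\lesssim a(\theta,\theta)+h^2$. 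Since $a(\theta(0),\theta(0))\lesssim h^2$, one concludes $\sup_{0\le t\le T}\|\theta(t)\|_1\lesssim h$, and Step 1 together with the triangle inequality gives \eqref{semi-discrete1}. (Equivalently one may represent $\theta$ by the Duhamel formula and invoke the smoothing bounds $\|L_h^{\gamma}e^{-\tau L_h}\|_0\lesssim\tau^{-\gamma}$ of Lemma \ref{lemma1}(i) with $\gamma=0,\tfrac12$.)

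\emph{Main obstacle.}
Lemma \ref{lemma3} is applicable only when both of its arguments stay in a fixed $H^1$-tube around $u(t)$; for $u$ this is \eqref{strong-regularity2}, but for $u_h$ it is precisely what is being proved, so the energy estimate above is a priori circular. This is broken by a standard continuation (bootstrap) argument: let $t^\ast\in(0,T]$ be the largest time with $\|u_h(t)-u(t)\|_1\le 1$ on $[0,t^\ast)$; on that interval Step 2 gives $\|u_h(t)-u(t)\|_1\le Ch$ with $C$ independent of $h$ and of $t^\ast$, so choosing $h_0$ with $Ch_0<1/2$ and using the continuity of $t\mapsto u_h(t)\in H^1$ forces $t^\ast=T$. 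I expect this bootstrapping---together with the dimension-dependent Sobolev bookkeeping for the nonlinearity, already carried out in Lemma \ref{lemma3}---to be the only genuinely delicate point, the remaining steps being the routine ones for a linear parabolic Galerkin scheme.
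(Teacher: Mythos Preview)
The paper does not actually prove this lemma: it simply states that the result ``readily comes from Theorem 14.2 in \cite{ThomeeVidar2006}'' and moves on. Your opening strategy---verify the hypotheses and invoke the reference---is therefore exactly the paper's own proof, and your subsequent sketch of the Ritz-projection energy argument with a continuation (bootstrap) step is precisely the machinery behind Thom\'ee's theorem, so you have supplied considerably more than the paper does.

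Two small wrinkles in the sketch are worth flagging. First, Lemma~\ref{lemma3} as stated in this paper requires $v,w\in V_h$, whereas you invoke it with $w=u(t)\notin V_h$; this is harmless because the proof of Lemma~\ref{lemma3} uses only Sobolev embeddings valid on all of $H^1(\Omega)$, but it is a formal mismatch. Second, your claim that $\|\rho_t\|_0=O(h)$ requires $u_t\in H^1(\Omega)$, which does not follow from \eqref{strong-regularity2} alone (the equation gives only $u_t\in L^2$ from $u\in H^2$); Thom\'ee's Theorem~14.2 carries this additional regularity of $u_t$ as part of its hypotheses, so the paper's lemma is slightly underspecified rather than your argument being at fault.
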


A direct consequence of Lemma \ref{semi-estimate} is
\begin{equation}\label{erra}
\|u(t_n)-u_h(t_n)\|_1\lesssim h, \quad\forall\, n=0,\cdots,N_T,
\end{equation}
where  the hidden constant is independent of $h$. Then the remaining part for us is to  bound $\|u_h(t_n)-u_h^n\|_1$. Note that the semi-discrete solution $u_h(t)$ obtained from (\ref{eq1-5}) itself depends on the spatial mesh size $h$, on the other hand we need remove the dependence of the hidden constants on $h$ in estimating $\|u_h(t_n)-u_h^n\|_1$. In order to achieve this goal, we convert the semi-discrete solution $u_h(t_{n+1})$ ($n=0,1,\cdots,N_T-1$) into the sum of the following two parts for further analysis:
\begin{equation}
\label{eq1-6}
\begin{split}
u_h(t_{n+1}) =\;& e^{-\Delta t L_h }u_h(t_n)+\int_{0}^{\Delta t } e^{-(\Delta t  - \sigma)L_h}P_hf(t_n+\sigma, u_h(t_n+\sigma))\dd \sigma\\
=\;& e^{-\Delta t  L_h}u_h(t_n) + \int_0^{\Delta t } e^{-(\Delta t  - \sigma)L_h}P_hf(t_n+\sigma, u(t_n+\sigma)) \dd \sigma\\
&\hspace{-0.2cm}+\int_0^{\Delta t } e^{-(\Delta t  - \sigma)L_h}\big(P_hf(t_n+\sigma, u_h(t_n+\sigma))-P_hf(t_n+\sigma, u(t_n+\sigma)) \big) \dd \sigma.
\end{split}
\end{equation}

Define the following functions:
\begin{equation}
\left\{
\label{function}
\begin{split}
\psi_i(-\Delta t L_h) &= \varphi_i(-\Delta t L_h)-\textstyle\sum\limits_{k=1}^s b_k(-\Delta t L_h)\frac{c_k^{i-1}}{(i-1)!}, \; i=1, \cdots, s,\\
\psi_{j, i}(-\Delta t L_h) &= \varphi_j(-c_i\Delta t L_h)c_i^j - \textstyle\sum\limits_{k=1}^{i-1} a_{ik}(-\Delta t L_h)\frac{c_k^{j-1}}{(j-1)!}, \; i,j =1, \cdots, s.
\end{split}
\right.
\end{equation}
We also denote $f^{(k)}(t, u(t)) = \frac{d^k}{dt^k}f(t,u(t))$ as the $k$-th full differentiation of $f$ with respect to $t$.
By comparing (\ref{eq1-6}) with the fully-discrete scheme (\ref{fully}), we then obtain
\begin{eqnarray}
u_h(t_n+c_i\Delta t )&= &e^{-c_i \Delta t  L_h}u_h(t_n)+\Delta t  \textstyle\sum\limits_{j=1}^{i-1} a_{ij}(-\Delta t  L_h)\nonumber\\
&& \qquad P_hf(t_n+c_j \Delta t , u(t_n+c_j\Delta t )) + \delta_{ni},\label{111}\\
u_h(t_{n+1}) &= &e^{-\Delta t  L_h}u_h(t_n)+\Delta t  \textstyle\sum\limits_{i=1}^s b_i(-\Delta t L_h)\nonumber\\
&&\qquad P_hf(t_n+c_i\Delta t , u(t_n+c_i\Delta t )) + \delta_{n+1},\label{222}
\end{eqnarray}
where the defect terms $\{\delta_{ni}\}_{i=1}^s$ and  $\delta_{n+1}$ are respectively given by
\begin{equation*}
\begin{split}
\delta_{ni} &= \textstyle\sum\limits_{j=1}^{r} \Delta t ^j \psi_{j,i}(-\Delta t  L_h)P_hf^{(j-1)}(t_n,u(t_n))+\delta_{ni}^{[r]},\\
\delta_{n+1} &= \textstyle\sum\limits_{i=1}^r \Delta t ^i \psi_i(-\Delta t  L_h)P_hf^{(i-1)}(t_n,u(t_n))+\delta_{n+1}^{[r]},\\
\end{split}
\end{equation*}
with the remainders $\delta_{ni}^{[r]}$ and $\delta_{n+1}^{[r]}$ defined respectively by
\begin{equation*}
\begin{split}
\delta_{ni}^{[r]}=&\int_0 ^{c_i \Delta t }e^{-(c_i\Delta t  - \tau)L_h}\int_0^{\tau}\frac{(\tau-\sigma)^{r-1}}{(r-1)!}P_hf^{(r)}(t_n+\sigma, u(t_n+\sigma))\dd \sigma \dd \tau \\
&- \Delta t \textstyle\sum\limits_{k=1}^{i-1}a_{ik}(-\Delta t  L_h)\displaystyle\int_0^{c_k\Delta t }\frac{(c_k\Delta t -\sigma)^{r-1}}{(r-1)!}P_hf^{(r)}(t_n+\sigma, u(t_n+\sigma))\dd \sigma\\
&+ \int_0^{c_i \Delta t } e^{-(c_i\Delta t -\sigma)L_h}\big(P_hf(t_n+\sigma,u_h(t_n+\sigma))-P_hf(t_n+\sigma, u(t_n+\sigma)) \big)\dd \sigma,
\end{split}
\end{equation*}
\begin{equation*}
\begin{split}
\delta_{n+1}^{[r]} =& \int_0 ^{\Delta t }e^{-(\Delta t  - \tau)L_h}\int_0^{\tau}\frac{(\tau-\sigma)^{r-1}}{(r-1)!}P_hf^{(r)}(t_n+\sigma, u(t_n+\sigma))\dd \sigma \dd \tau \\
&- \Delta t  \textstyle\sum\limits_{i=1}^s b_i(-\Delta t  L_h)\displaystyle\int_0^{c_i\Delta t }\frac{(c_i\Delta t -\sigma)^{r-1}}{(r-1)!}P_hf^{(r)}(t_n+\sigma,u(t_n+\sigma))\dd \sigma\\
&+ \int_0^{\Delta t }e^{-(\Delta t -\sigma)L_h}\big(P_hf(t_n+\sigma, u_h(t_n+\sigma))-P_hf(t_n+\sigma, u(t_n+\sigma)) \big)\dd \sigma.
\end{split}
\end{equation*}
Here $r$ can be any nonnegative integers such that $f^{(r)}(t, u(t))$  exists and is continuous.

In what follows, we will adopt the arguments proposed in  \cite{HochbruckOstermann2005b} to bound $\|u_h^n-u_h(t_n)\|_1$.
For brevity, let us define  $e_n = u_h^n-u_h(t_n)$
and  $E_{ni}=U_{ni}-u_h(t_n+c_i\Delta t )$ for $i=1,\cdots,s$.
Then we arrive at the following recurrence relations:
\begin{eqnarray}
&&E_{ni} = e^{-c_i\Delta t L_h}e_n+
\Delta t \textstyle\sum\limits_{j=1}^{i-1} a_{ij}(-\Delta t L_h)\big(P_hf(t_n+c_j\Delta t ,U_{nj})\nonumber\\
&&\hspace{5cm}-P_hf(t_n+c_j\Delta t ,u_h(t_n+c_j\Delta t ))\big)-\delta_{ni}. \label{recursion-E}\\
&&e_{n+1} = e^{-\Delta t L_h}e_n+\Delta t \textstyle\sum\limits_{i=1}^s b_i(-\Delta t L_h)\big(P_hf(t_n+c_i\Delta t ,U_{ni})
\nonumber\\&&\hspace{5cm}-P_hf(t_n+c_i\Delta t ,u_h(t_n+c_i\Delta t ))\big)-\delta_{n+1},\label{recursion-e}
\end{eqnarray}

We first have the following result on the defect terms in \eqref{111} and \eqref{222}.

\begin{lemma}
	\label{lemma4}  Given an integer $r=1$ or $2$.
	Suppose that the function $f$ satisfies Assumptions \ref{assumption2} and \ref{assumption3}, and  the exact solution $u(t)$ fulfills (\ref{strong-regularity2}) and (\ref{strong-regularity1}) in Assumptions \ref{assumption4}. Suppose that $u(t)$ additionally  fulfills (\ref{strong-regularity3})  if $r=2$.
	Then for $n=0,\cdots,N_T$, $i = 1, \cdots, s$, it holds that
	\begin{subequations}\label{lemma4_eq}
		\begin{align}
		\textstyle\|\delta_{ni}^{[r]}\|_1 \lesssim (\Delta t )^{r+1}\textstyle\sup\limits_{0\leq \eta \leq 1}\|f^{(r)}(t_n+\eta \Delta t ,u(t_n+\eta \Delta t ))\|_1 + h,\label{lemma4-1}\\
		\textstyle\Big\|\textstyle\sum\limits_{j=0}^{n-1}e^{-j\Delta t  L_h}\delta_{n-j}^{[r]}\Big\|_1 \lesssim (\Delta t)^r \textstyle\sup\limits_{ 0  \leq t \leq    T } \|f^{(r)}(t,u(t))\|_1 +h.\label{lemma4-2}
		\end{align}
		Note that the above hidden constants are independent of $h$ and $\Delta t$.	
	\end{subequations}
\end{lemma}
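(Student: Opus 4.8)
The plan is to estimate the defect $\delta_{ni}^{[r]}$ --- the treatment of $\delta_{n+1}^{[r]}$ being identical after replacing $a_{ik}(-\Delta t L_h)$ by $b_k(-\Delta t L_h)$ and $c_i\Delta t$ by $\Delta t$ --- by splitting it, exactly as it is written, as $\delta_{ni}^{[r]}=A_1+A_2+A_3$, where $A_1$ is the iterated integral against $e^{-(c_i\Delta t-\tau)L_h}$, $A_2$ is the $a_{ik}$-weighted quadrature term, and $A_3$ is the integral of $e^{-(c_i\Delta t-\sigma)L_h}\big(P_hf(t_n+\sigma,u_h(t_n+\sigma))-P_hf(t_n+\sigma,u(t_n+\sigma))\big)$. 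Every $H^1$-norm will be turned into an $L^2$-norm via \eqref{norm-relation}, $\|A_\ell\|_1\eqsim\|L_h^{1/2}A_\ell\|_0$, and the resulting $L^2$-estimates will be assembled from Lemma \ref{lemma1}, the $L^2$- and $H^1$-stability of $P_h$, Lemma \ref{lemma3}, and Lemma \ref{semi-estimate}. The hypotheses (Assumptions \ref{assumption2} and \ref{assumption3}, and the regularity conditions \eqref{strong-regularity2}, \eqref{strong-regularity1}, together with \eqref{strong-regularity3} when $r=2$) are precisely what makes $f^{(r)}(t,u(t))$ well-defined and continuous, so the Taylor-with-integral-remainder identities defining $\delta_{ni}^{[r]}$ are legitimate.

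For the two quadrature terms $A_1$ and $A_2$ I would move the factor $L_h^{1/2}$ onto $P_hf^{(r)}$ (not onto the semigroup) and then use $\|e^{-tL_h}\|_0\lesssim1$ and $\|a_{ik}(-\Delta t L_h)\|_0\lesssim1$ (Lemma \ref{lemma1}(i) and (iii) with $\gamma=0$) together with the $H^1$-stability of $P_h$, which gives $\|L_h^{1/2}P_hf^{(r)}(t,u(t))\|_0=\|P_hf^{(r)}(t,u(t))\|_1\lesssim\|f^{(r)}(t,u(t))\|_1$. This reduces $\|A_1\|_1+\|A_2\|_1$ to a constant multiple of the elementary integrals $\int_0^{c_i\Delta t}\!\int_0^{\tau}\tfrac{(\tau-\sigma)^{r-1}}{(r-1)!}\dd\sigma\dd\tau$ and $\Delta t\sum_{k<i}\int_0^{c_k\Delta t}\tfrac{(c_k\Delta t-\sigma)^{r-1}}{(r-1)!}\dd\sigma$, multiplied by $\sup_{0\le\eta\le1}\|f^{(r)}(t_n+\eta\Delta t,u(t_n+\eta\Delta t))\|_1$; both integrals equal a fixed constant times $(\Delta t)^{r+1}$ because the nodes $c_k\in[0,1]$ are fixed --- this is the first term of \eqref{lemma4-1}. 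For $A_3$ I would instead keep $L_h^{1/2}$ on the semigroup: Lemma \ref{lemma1}(i) with $\gamma=1/2$ gives $\|L_h^{1/2}e^{-(c_i\Delta t-\sigma)L_h}\|_0\lesssim(c_i\Delta t-\sigma)^{-1/2}$; the $L^2$-stability of $P_h$ and Lemma \ref{lemma3} together with Lemma \ref{semi-estimate} (which, for $h$ small enough, gives $\|u_h(t)-u(t)\|_1\lesssim h\le R$) give $\|P_h\big(f(t,u_h(t))-f(t,u(t))\big)\|_0\lesssim\|u_h(t)-u(t)\|_1\lesssim h$; and $\int_0^{c_i\Delta t}(c_i\Delta t-\sigma)^{-1/2}\dd\sigma\lesssim(\Delta t)^{1/2}$, so $\|A_3\|_1\lesssim h$. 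Adding the three bounds proves \eqref{lemma4-1}.

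For \eqref{lemma4-2} I would keep the same splitting $\delta_{n-j}^{[r]}=A_1^{(j)}+A_2^{(j)}+A_3^{(j)}$ and use the semigroup identity $e^{-j\Delta t L_h}e^{-(\Delta t-\tau)L_h}=e^{-((j+1)\Delta t-\tau)L_h}$. For the $A_1^{(j)}$- and $A_2^{(j)}$-contributions, moving $L_h^{1/2}$ onto $P_hf^{(r)}$ and using the \emph{uniform} bounds $\|e^{-((j+1)\Delta t-\tau)L_h}\|_0\lesssim1$ and $\|a_{ik}(-\Delta t L_h)\|_0,\|b_k(-\Delta t L_h)\|_0\lesssim1$ reduces the estimate to summing at most $n\le T/\Delta t$ copies of the local $O((\Delta t)^{r+1})$ bound of the previous step, which produces the $O((\Delta t)^r)$ term of \eqref{lemma4-2}. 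For the $A_3^{(j)}$-contribution, keeping $L_h^{1/2}$ on the semigroup produces the singular kernel $((j+1)\Delta t-\tau)^{-1/2}$, and the telescoping identity
\begin{equation*}
\sum_{j=0}^{n-1}\int_0^{\Delta t}\big((j+1)\Delta t-\tau\big)^{-1/2}\dd\tau
=2\sum_{j=0}^{n-1}\big(\sqrt{(j+1)\Delta t}-\sqrt{j\Delta t}\big)=2\sqrt{t_n}\le 2\sqrt{T}
\end{equation*}
shows, together with $\|P_h\big(f(\cdot,u_h)-f(\cdot,u)\big)\|_0\lesssim h$ as above, that the $A_3^{(j)}$-part stays $O(h)$; this establishes \eqref{lemma4-2}.

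The elementary integrals of the polynomial weights and the H\"older/Sobolev steps hidden inside Lemma \ref{lemma3} are routine; the crux is the accumulation argument behind \eqref{lemma4-2}. There one must lose exactly one power of $\Delta t$ in the quadrature part --- obtained by using the \emph{bounded}, rather than the smoothing, estimate for $e^{-tL_h}$ and counting the $n\le T/\Delta t$ steps --- while losing \emph{no} power of $\Delta t$, only the factor $h$, in the spatial-defect part, which on the contrary does require the parabolic smoothing $\|L_h^{1/2}e^{-tL_h}\|_0\lesssim t^{-1/2}$ and the telescoping sum above. A secondary, purely technical point is that Lemma \ref{lemma3} is applied with its second argument $u(t)\notin V_h$; since its proof uses only H\"older's inequality and Sobolev embeddings, it extends verbatim to $v\in V_h$, $w\in H^1(\Omega)$, which is all that is needed here.
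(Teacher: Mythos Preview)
Your proof is correct and follows essentially the same approach as the paper: the same three-term splitting of $\delta_{ni}^{[r]}$, moving $L_h^{1/2}$ onto $P_hf^{(r)}$ for the quadrature pieces $A_1,A_2$ and using Lemma~\ref{semi-estimate} together with the local Lipschitz bound for the spatial-defect piece $A_3$. The only technical variation is that for $A_3$ the paper bounds the \emph{integrated} operator directly via the spectral identity $\big\|\int_0^{\Delta t}L_h^{1/2}e^{-(\Delta t-\sigma)L_h}\dd\sigma\big\|_0=\max_{\lambda}\lambda^{-1/2}(1-e^{-\Delta t\lambda})\le\alpha^{-1/2}$ rather than your pointwise smoothing estimate plus $\int_0^{c_i\Delta t}(c_i\Delta t-\sigma)^{-1/2}\dd\sigma$, and it dismisses \eqref{lemma4-2} with ``in the similar manner'' whereas you spell out the accumulation explicitly (your telescoping sum playing the role of the geometric-series cancellation one would get from the paper's spectral viewpoint).
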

\begin{proof}
	Recalling \eqref{eq1-8} in Lemma \ref{lemma1} and the relation \eqref{norm-relation}, we have after some direct manipulations
	\begin{equation}
	\begin{split}
	&\Big\|\int_0^{c_i\Delta t }e^{-(c_i\Delta t -\tau)L_h}\int_0^{\tau}\frac{(\tau-\sigma)^{r-1}}{(r-1)!}P_hf^{(r)}(t_n+\sigma,u(t_n+\sigma))\dd \sigma \dd \tau\Big\|_1
	\\
	&\;\lesssim \Big\|\int_0^{c_i\Delta t }e^{-(c_i\Delta t -\tau)L_h}\int_0^{\tau}\frac{(\tau-\sigma)^{r-1}}{(r-1)!}L_h^{\frac 1 2}P_hf^{(r)}(t_n+\sigma,u(t_n+\sigma))\dd\sigma\dd\tau\Big\|_0 
	\\
	&\;\lesssim  \textstyle(\Delta t)^{r+1}\sup\limits_{0 \leq \tau \leq c_i\Delta t }\|e^{-(c_i\Delta t -\tau)L_h}\|_0 \textstyle\sup\limits_{0\leq \eta \leq 1}\|L_h^{\frac 1 2} P_hf^{(r)}(t_n+\eta \Delta t ,u(t_n+\eta \Delta t ))\|_0\\
	&\;\lesssim (\Delta t) ^{r+1}\textstyle\sup\limits_{0\leq \eta \leq 1}\|P_hf^{(r)}(t_n+\eta \Delta t ,u(t_n+\eta \Delta t ))\|_1\\
	&\;\lesssim (\Delta t) ^{r+1}\textstyle\sup\limits_{0\leq \eta \leq 1}\|f^{(r)}(t_n+\eta \Delta t ,u(t_n+\eta \Delta t ))\|_1, 
	\end{split}
	\label{mid-estimate1}
	\end{equation}
	where  the last inequality is due to that $P_h$ is $H^1$-stable.
	Similarly, we also have
	\begin{equation}
	\begin{split}
	&\Big\| \int_0^{\Delta t }e^{-(\Delta t -\sigma)L_h}\Big(P_hf(t_n+\sigma,u_h(t_n+\sigma))-P_hf(t_n+\sigma,u(t_n+\sigma)) \Big)\dd \sigma\Big\|_1 
	\\
	&\lesssim \Big\|\int_0^{\Delta t }L_h^{\frac 1 2}e^{-(\Delta t -\sigma)L_h}\dd \sigma\Big\|_0 \textstyle\sup\limits_{0 \leq \eta \leq 1} \|P_hf(t_n+\eta \Delta t ,u_h(t_n+\eta \Delta t )) 
	\\
	&\quad -P_hf(t_n+\eta\Delta t ,u(t_n+\eta\Delta t ))\Big\|_0. 
	\end{split}
	\label{eq4-3}
	\end{equation}
	
	In view of the similar arguments for proving Lemma \ref{lemma1}, we have
	\begin{equation}
	\begin{split}
	\Big\|\int_0^{\Delta t }L_h^{\frac 1 2}e^{-(\Delta t -\sigma)L_h}\dd \sigma \Big\|_0 &= \textstyle\max\limits_{\lambda \in \lambda(L_h)} \Big|\int_0^{\Delta t }\lambda^{\frac 1 2}e^{-(\Delta t -\sigma)\lambda}\dd \sigma \Big| \\
	&\leq\textstyle\max\limits_{\lambda \in \lambda(L_h)} |\lambda^{-\frac 1 2}|\lesssim 1. 
	\end{split}
	\label{eq4-4a}
	\end{equation}
	Since $P_h$ is $L^2$ stable and the function $f$ is locally-Lipschitz continuous (Lemma \ref{lemma3}), we further obtain from \eqref{eq4-3}-\eqref{eq4-4a} and Lemma \ref{semi-estimate}  that
	\begin{equation}
	\begin{split}
	&\Big\| \int_0^{\Delta t }e^{-(\Delta t -\sigma)L_h}\big(P_hf(t_n+\sigma,u_h(t_n+\sigma))-P_hf(t_n+\sigma,u(t_n+\sigma)) \big)\dd \sigma\Big\|_1 
	\\
	&\lesssim \textstyle\sup\limits_{0 \leq \eta \leq 1} \|P_hf(t_n+\eta \Delta t ,u_h(t_n+\eta \Delta t ))-P_hf(t_n+\eta \Delta t ,u(t_n+\eta \Delta t ))\|_0 
	\\
	&\lesssim  \textstyle\sup\limits_{0 \leq \eta \leq 1}\|u_h(t_n+\eta \Delta t )-u(t_n+\eta \Delta t )\|_1\lesssim h. 
	\end{split}
	\label{mid-estimate2}
	\end{equation}
	According to \eqref{666} in Lemma \ref{lemma1} and the similar arguments for deriving \eqref{mid-estimate1}, 
	\begin{equation}
	\begin{split}
	\quad&\Big\|\Delta t \textstyle\sum\limits_{k=1}^{i-1} a_{ik}(-\Delta t L_h)\displaystyle\int_0^{c_k\Delta t }\frac{(c_k\Delta t -\sigma)^{r-1}}{(r-1)!}P_hf^{(r)}(t_n+\sigma,u(t_n+\sigma))\dd \sigma\Big\|_1 
	\\
	&\lesssim \Big\|\Delta t \textstyle\sum\limits_{k=1}^{i-1} a_{ik}(-\Delta t L_h)\displaystyle\int_0^{c_k\Delta t }\frac{(c_k\Delta t -\sigma)^{r-1}}{(r-1)!}L_h^{\frac 1 2}\\
	&\qquad\quad \cdot P_hf^{(r)}(t_n+\sigma,u(t_n+\sigma))\dd \sigma\Big\|_0 
	\\
	&\leq  (\Delta t)^{r+1} \textstyle\sum\limits_{k=1}^{i-1}\|a_{ik}(-\Delta t L_h)\|_0\sup\limits_{0 \leq \eta \leq 1}\|P_hf^{(r)}(t_n+\eta \Delta t ,u(t_n+\eta \Delta t ))\|_1 \\
	&\lesssim (\Delta t)^{r+1}\textstyle\sup\limits_{0 \leq \eta \leq 1}\|f^{(r)}(t_n+\eta \Delta t ,u(t_n+\eta \Delta t ))\|_1. 
	\end{split}
	\label{mid-estimate3}
	\end{equation}
	
	Now, using the triangle inequality, the regularity assumptions for $u(t)$ and $f$, and the estimates \eqref{mid-estimate1}, \eqref{mid-estimate2} and \eqref{mid-estimate3}, we get
	\begin{equation}
	\|\delta_{ni}^{[r]}\|_1 \lesssim (\Delta t )^{r+1}\|f^{(r)}(t_n+\eta \Delta t ,u(t_n+\eta \Delta t ))\|_1+ h,\quad\forall\, i=1, \cdots, s,
	\end{equation}
	which leads to \eqref{lemma4-1}.  Also \eqref{lemma4-2} can be derived in the similar manner.
\end{proof}

For the EIFE1 scheme \eqref{Euler} (i.e., $s=1$),  the following error equation  holds:
\begin{equation}\label{errequ1}
e_{n+1} = e^{-\Delta t L_h}e_n+\Delta t \varphi_1(-\Delta t L_h)\big(P_hf(t_n,u_h^n)-P_hf(t_n,u(t_n)) \big) - \delta_{n+1}.
\end{equation}

\begin{theorem}[Error estimate for the EIFE1 scheme]
	\label{thm7}
	Suppose the function $f$ satisfies Assumptions \ref{assumption2} and \ref{assumption3}, and the exact solution $u(t)$ fulfills (\ref{strong-regularity2}) and (\ref{strong-regularity1}). There exists a constant $h_0>0$ such that if the spatial mesh size $h \leq h_0$, then the numerical solution $\{u_h^n\}$ produced by the EIFE1 scheme  (\ref{Euler})  satisfies
	\begin{equation}\label{err1}
	\|u(t_n)-u_h^n\|_1 \lesssim \Delta t + h, \quad \forall\,n=1,\cdots, N_T,
	\end{equation}
	where the hidden constant is independent of $h$ and $\Delta t$.
\end{theorem}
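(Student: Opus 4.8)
We plan to estimate the two terms on the right-hand side of \eqref{errtri} separately. The semi-discrete error is already controlled, $\|u(t_n)-u_h(t_n)\|_1\lesssim h$ by \eqref{erra}, so everything reduces to bounding $\|e_n\|_1$ with $e_n=u_h^n-u_h(t_n)$. Since $u_h^0=P_hu_0=u_h(0)$ we have $e_0=0$, and unrolling the EIFE1 error recursion \eqref{errequ1} gives, for $n\ge1$,
\[
e_n=-\sum_{j=0}^{n-1}e^{-j\Delta t L_h}\delta_{n-j}+\Delta t\sum_{k=0}^{n-1}e^{-(n-1-k)\Delta t L_h}\varphi_1(-\Delta t L_h)\big(P_hf(t_k,u_h^k)-P_hf(t_k,u(t_k))\big).
\]
For the first (defect) sum we would invoke Lemma~\ref{lemma4}: when $s=1$ the consistency relations \eqref{eq2-3} give $\sum_jb_j(-\Delta t L_h)=\varphi_1(-\Delta t L_h)$, hence $\psi_1(-\Delta t L_h)=0$ in \eqref{function} and therefore $\delta_{n-j}=\delta_{n-j}^{[1]}$; then \eqref{lemma4-2} with $r=1$, together with the regularity in Assumptions~\ref{assumption2}--\ref{assumption4} that keeps $\sup_{0\le t\le T}\|f^{(1)}(t,u(t))\|_1$ bounded, yields $\big\|\sum_{j=0}^{n-1}e^{-j\Delta t L_h}\delta_{n-j}\big\|_1\lesssim\Delta t+h$.

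For the nonlinear sum we would first establish the parabolic-smoothing estimate
\[
\big\|e^{-(n-1-k)\Delta t L_h}\varphi_1(-\Delta t L_h)w\big\|_1\lesssim\big((n-k)\Delta t\big)^{-1/2}\|w\|_0,\qquad 0\le k\le n-1,
\]
by combining the norm equivalence \eqref{norm-relation} with Lemma~\ref{lemma1}(i) (for the factor $L_h^{1/2}e^{-(n-1-k)\Delta t L_h}$ when $k<n-1$) and Lemma~\ref{lemma1}(iii) (for $\|\varphi_1(-\Delta t L_h)\|_0$, and for the borderline term $k=n-1$ which needs $\|\tau^{1/2}L_h^{1/2}\varphi_1(-\tau L_h)\|_0\lesssim1$). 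Next, we would split $P_hf(t_k,u_h^k)-P_hf(t_k,u(t_k))$ into $\big(P_hf(t_k,u_h^k)-P_hf(t_k,u_h(t_k))\big)+\big(P_hf(t_k,u_h(t_k))-P_hf(t_k,u(t_k))\big)$; by the $L^2$-stability of $P_h$, the local Lipschitz property of $f$ (Lemma~\ref{lemma3}), and Lemma~\ref{semi-estimate}, the $L^2$-norms of the two brackets are $\lesssim\|e_k\|_1$ and $\lesssim\|u_h(t_k)-u(t_k)\|_1\lesssim h$ respectively, the first being legitimate only once $u_h^k$ and $u_h(t_k)$ are known to lie in the Lipschitz tube of radius $R$ about $u(t_k)$ (to be secured by induction below). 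Inserting these bounds and using $\Delta t\sum_{k=0}^{n-1}\big((n-k)\Delta t\big)^{-1/2}=\sqrt{\Delta t}\sum_{m=1}^{n}m^{-1/2}\lesssim\sqrt{n\Delta t}\le\sqrt T$ leads to
\[
\|e_n\|_1\le C(\Delta t+h)+C\sqrt{\Delta t}\sum_{k=0}^{n-1}(n-k)^{-1/2}\|e_k\|_1 .
\]

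The estimate \eqref{err1} then follows from a discrete Gr\"onwall inequality with the weakly singular kernel $(n-k)^{-1/2}$, whose total discrete mass $\sqrt{\Delta t}\sum_{m\ge1}m^{-1/2}\lesssim\sqrt T$ is bounded uniformly in $\Delta t$; this gives $\|e_n\|_1\le C(T)(\Delta t+h)$ for all $n$ with $n\Delta t\le T$, with $C(T)$ independent of $h$, $\Delta t$ and $n$. We would run this Gr\"onwall argument inside an induction on $n$ that simultaneously maintains the tube condition: if $h_0$ (and the time step) are small enough that $Ch\le R/4$ and $C(T)(\Delta t+h)\le R/2$, then the hypothesis $\|e_k\|_1\le R/2$ for all $k<n$ yields $\|u_h^k-u(t_k)\|_1\le\|e_k\|_1+\|u_h(t_k)-u(t_k)\|_1\le R/2+Ch\le R$, so all the preceding estimates apply and the Gr\"onwall bound returns $\|e_n\|_1\le C(T)(\Delta t+h)\le R/2$, which closes the induction (the base case being $e_0=0$). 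Combining $\|e_n\|_1\lesssim\Delta t+h$ with \eqref{erra} in \eqref{errtri} gives \eqref{err1}. The main obstacle is exactly this coupling: since Lemma~\ref{lemma3} provides only local Lipschitz continuity, the Gr\"onwall estimate and the a priori tube bound are interdependent, so the bootstrap must be arranged carefully and forces a smallness restriction on the time step in addition to $h$; the weakly singular discrete Gr\"onwall lemma and the bookkeeping of the non-smoothing term $k=n-1$ are the remaining technical points.
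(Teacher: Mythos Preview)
Your proof is correct and follows essentially the same route as the paper: unroll the error recursion, observe $\psi_1=0$ so $\delta_j=\delta_j^{[1]}$ and bound the defect sum via \eqref{lemma4-2} with $r=1$, control the nonlinear sum through the parabolic smoothing estimate combined with the local Lipschitz property (Lemma~\ref{lemma3}) and the semi-discrete bound \eqref{erra}, and close with the weakly singular discrete Gr\"onwall inequality (the paper cites Dixon--McKee, Theorem~6.1 with $\alpha=\tfrac12$). Your explicit bootstrap to keep $u_h^k$ inside the Lipschitz tube is in fact more careful than the paper, which simply invokes Lemma~\ref{lemma3} without verifying its hypothesis and then asserts (in the remark following the theorem) that no restriction on $\Delta t$ is needed; this is a genuine subtlety the paper glosses over, so your caution there is warranted rather than a defect in your argument.
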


\begin{proof}
	By repeatedly applying \eqref{errequ1}, we get
	\begin{equation}
	\label{Euler-error}
	\begin{split}
	e_n = \;&\Delta t \textstyle\sum\limits_{j=1}^{n-1} e^{-(n-j-1)\Delta t L_h}\varphi_1(-\Delta t L_h)\big(P_hf(t_j,u_h^j)-P_hf(t_j,u(t_j)) \big)\\
	&-\textstyle\sum\limits_{j=0}^{n-1}e^{-j\Delta t L_h}\delta_{n-j}.
	\end{split}
	\end{equation}
	
	Since $s=1$, based on the definition of $\psi_i(-\Delta t L_h)$ in (\ref{function}) and Lagrangian interpolation theorem, we can derive that $\psi_1(-\Delta t L_h)=0$, which implies  $\delta_j=\delta_j^{[1]}$ for $j=1, \cdots, n$. Recalling \eqref{lemma4-2} in Lemma \ref{lemma4}  (with $r=1$), we can immediately obtain the $H^1$-norm estimation of the second term in \eqref{Euler-error} as
	\begin{align}
	\label{eq4-6a}
	\Big\|\textstyle\sum\limits_{j=0}^{n-1}e^{-j\Delta t L_h}\delta_{n-j}\Big\|_1 &= \Big\|\textstyle\sum\limits_{j=0}^{n-1}e^{-j\Delta t L_h}\delta_{n-j}^{[1]}\Big\|_1 \lesssim \Delta t\textstyle\sup\limits_{ 0  \leq t \leq    T }\|f^{(1)}(t,u(t))\|_1+h.
	\end{align} 
	As for the $H^1$-norm evaluation of the first term in \eqref{Euler-error}, it follows from \eqref{norm-relation} and the triangle inequality that {
		\begin{equation}
		\label{eq4-6}
		\begin{split}
		&\Big\|\Delta t \textstyle\sum\limits_{j=1}^{n-1}e^{-(n-j-1)\Delta t L_h}\varphi_1(-\Delta t L_h)\big(P_hf(t_j,u_h^j)-P_hf(t_j,u(t_j)) \big)\Big\|_1\\
		&\lesssim  \|\Delta t L_h^{\frac 1 2}\varphi_1(-\Delta t L_h)\|_0\|P_hf(t_{n-1},u_h^{n-1})-P_hf(t_{n-1},u(t_{n-1}))\|_0\\
		&\quad+\Big\|\textstyle\sum\limits_{j=0}^{n-2}\Delta t L_h^{\frac 1 2}e^{-(n-j-1)\Delta t L_h}\varphi_1(-\Delta t L_h)\big(P_hf(t_j,u_h^j)-P_hf(t_j,u(t_j)) \big)\Big\|\\
		&:={\rm I}_1+{\rm I}_2.
		\end{split}
		\end{equation}}
	By applying \eqref{eq1-8} in Lemma \ref{lemma1}, we get
	\begin{equation}
	\label{eq4-7}
	\begin{split}
	{\rm I}_1 =\,& \Big\|L_h^{\frac 1 2}\int_0^{\Delta t }e^{-(\Delta t -\sigma)L_h}\dd \sigma\Big\|_0\|P_hf(t_{n-1},u_h^{n-1})-P_hf(t_{n-1},u(t_{n-1}))\|_0\\
	\lesssim\,& \Delta t \textstyle\sup\limits_{0 \leq \sigma \leq \Delta t }\|L_h^{\frac 1 2}e^{-(\Delta t -\sigma)L_h}\|_0\|P_hf(t_{n-1},u_h^{n-1})-P_hf(t_{n-1},u(t_{n-1}))\|_0\\
	\lesssim\,& (\Delta t)^{\frac 1 2}\|u_h^{n-1}-u(t_{n-1})\|_1.
	\end{split}
	\end{equation}
	For the second term in \eqref{eq4-6}, it follows from  \eqref{eq1-8} in Lemma \ref{lemma1} that
	\begin{equation*}
	\|\varphi_1(-\Delta t L_h)\|_0 = \Big\|\frac{1}{\Delta t }\int_0^{\Delta t }e^{-(\Delta t -\tau)L_h}\dd \tau\Big\|_0 \leq \textstyle\sup\limits_{0 \leq \tau \leq \Delta t }\|e^{-(\Delta t -\tau)}\|_0 \lesssim 1.
	\end{equation*}
	By \eqref{eq1-8} and \eqref{555} in Lemma \ref{lemma1} and similar arguments for deriving \eqref{mid-estimate2}, we  have {
		\begin{equation}
		\label{eq4-7a}
		\begin{split}
		{\rm I}_2\lesssim\, &\Big\|\Delta t L_h^{\frac 1 2}\textstyle\sum\limits_{j=0}^{n-2}e^{-(n-1-j)\Delta t L_h}\Big\|_0\textstyle\sup\limits_{ 0  \leq t \leq    T }\|P_hf(t,u_h(t))-P_hf(t,u(t))\|_0\\
		&+\Delta t \textstyle\sum\limits_{j=0}^{n-2}\|L_h^{\frac 1 2}e^{-(n-1-j)\Delta t L_h}\|_0\|P_hf(t_j,u_h^j)-P_hf(t_j,u_h(t_j))\|_0\\
		\lesssim \,& \textstyle\sup\limits_{ 0  \leq t \leq    T }\|P_hf(t,u_h(t))-P_hf(t,u(t))\|_0+\Delta t \textstyle\sum\limits_{j=0}^{n-2}t_{n-j-1}^{-\frac 1 2}\|P_hf(t_j,u_h^j)\\
		&\qquad-P_hf(t_j,u_h(t_j))\|_0\\
		\lesssim\, & \Delta t \textstyle\sum\limits_{j=0}^{n-2}t_{n-j-1}^{-\frac 1 2}\|e_j\|_1+h.
		\end{split}
		\end{equation}}
	In terms of \eqref{Euler-error} and the estimates \eqref{eq4-6a}-\eqref{eq4-7a}, we arrive at 
	\begin{equation*}
	\begin{split}
	\|e_n\|_1 &\lesssim (\Delta t)^{\frac 1 2}\|e_{n-1}\|_1 + \Delta t  \textstyle\sum\limits_{j=0}^{n-2}t_{n-j-1}^{-\frac 1 2}\|e_j\|_1 + \Delta t + h\\
	&\lesssim \Delta t \textstyle\sum\limits_{j=1}^{n-1}t_{n-j}^{-\frac 1 2}\|e_j\|_1+\Delta t+h.
	\end{split}
	\end{equation*}
	Then we have by the discrete Gronwall inequality (Theorem 6.1 with $\alpha=\frac 1 2$ in \cite{DixonMcKee1986})
	\begin{equation}\label{err1c}
	\|u_h(t_n)-u_h^n\|_1 \lesssim \Delta t+ h.
	\end{equation}
	The combination of \eqref{errtri}, \eqref{erra} and \eqref{err1c} immediately leads to \eqref{err1}.
\end{proof}

\begin{lemma}
	\label{Lemma5}
	Suppose the function $f$ satisfies Assumptions \ref{assumption2} and \ref{assumption3}, and  the exact solution $u(t)$ fulfills (\ref{strong-regularity2})-(\ref{strong-regularity1}). If $s\geq 2$, then it holds for any  $0 \leq n < N_T$,
	\begin{equation}
	\label{E_n}
	\|E_{ni}\|_1 \lesssim \|e_n\|_1 + (\Delta t)^2 \textstyle\sup\limits_{0\leq \eta \leq 1} \|f'(t_n+\eta \Delta t ,u(t_n+\eta \Delta t ))\|_1 + h, \quad \forall\, i=1, \cdots, s,
	\end{equation}
	where the hidden constant is independent of $h$ and $\Delta t$.
\end{lemma}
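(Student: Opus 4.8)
The plan is to start from the recurrence \eqref{recursion-E} for $E_{ni}$ and bound its three contributions — the propagated error $e^{-c_i\Delta t L_h}e_n$, the nonlinear stage differences weighted by $a_{ij}(-\Delta t L_h)$, and the defect $\delta_{ni}$ — separately in the $H^1$-norm, systematically converting $\|\cdot\|_1$ into an $L^2$-bound on $L_h^{1/2}(\cdot)$ via \eqref{norm-relation} and then applying the smoothing estimates of Lemma \ref{lemma1}.

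For the first term I would write $\|e^{-c_i\Delta t L_h}e_n\|_1\eqsim\|e^{-c_i\Delta t L_h}L_h^{1/2}e_n\|_0\le\|e^{-c_i\Delta t L_h}\|_0\|L_h^{1/2}e_n\|_0\lesssim\|e_n\|_1$ by \eqref{eq1-8}. For the defect, the key point is that the consistency condition \eqref{eq2-3}, i.e.\ $\textstyle\sum\limits_{k=1}^{i-1}a_{ik}(-\Delta t L_h)=c_i\varphi_1(-c_i\Delta t L_h)$, makes $\psi_{1,i}(-\Delta t L_h)=0$ in the expansion \eqref{function}; taking $r=1$ — which requires only the regularity \eqref{strong-regularity2}--\eqref{strong-regularity1}, exactly the hypotheses of the present lemma — therefore collapses $\delta_{ni}$ to its remainder $\delta_{ni}^{[1]}$, and Lemma \ref{lemma4} yields $\|\delta_{ni}\|_1=\|\delta_{ni}^{[1]}\|_1\lesssim(\Delta t)^2\textstyle\sup\limits_{0\le\eta\le1}\|f^{(1)}(t_n+\eta\Delta t,u(t_n+\eta\Delta t))\|_1+h$.

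For the stage-difference sum I would treat each summand as $\Delta t\,a_{ij}(-\Delta t L_h)g_{nj}$ with $g_{nj}=P_hf(t_n+c_j\Delta t,U_{nj})-P_hf(t_n+c_j\Delta t,u_h(t_n+c_j\Delta t))$. By \eqref{norm-relation} and \eqref{666} with $\gamma=\frac12$ one has $\|\Delta t\,L_h^{1/2}a_{ij}(-\Delta t L_h)\|_0\lesssim(\Delta t)^{1/2}$, so the summand is $\lesssim(\Delta t)^{1/2}\|g_{nj}\|_0$, and $\|g_{nj}\|_0\lesssim\|E_{nj}\|_1$ by $L^2$-stability of $P_h$ together with the local-Lipschitz estimate \eqref{rrr} of Lemma \ref{lemma3}. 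Collecting the three bounds gives
\[
\|E_{ni}\|_1\lesssim\|e_n\|_1+(\Delta t)^{1/2}\textstyle\sum\limits_{j=1}^{i-1}\|E_{nj}\|_1+(\Delta t)^2\textstyle\sup\limits_{0\le\eta\le1}\|f^{(1)}(t_n+\eta\Delta t,u(t_n+\eta\Delta t))\|_1+h .
\]
Since $c_1=0$, the case $i=1$ contains no sum and already obeys \eqref{E_n}; a short finite induction on $i=1,\dots,s$ — absorbing the bounded factor $(\Delta t)^{1/2}$ times the already-controlled $\|E_{nj}\|_1$ for $j<i$ — then delivers \eqref{E_n} for every $i$.

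The main obstacle, and the only non-routine point, is legitimizing the use of \eqref{rrr}: Lemma \ref{lemma3} needs both $u_h(t_n+c_j\Delta t)$ and the stage value $U_{nj}$ to lie in a fixed strip of radius $R$ about $u(t_n+c_j\Delta t)$. The inclusion for $u_h$ is immediate from Lemma \ref{semi-estimate} once $h\le h_0$, but the one for $U_{nj}$ is an a priori bound not available from the data alone; I would carry it as a standing hypothesis (allowing the hidden constants in \eqref{E_n} to depend on $R$ through the Lipschitz constant), to be reproduced at each step by the bootstrapping argument that accompanies the induction on $n$ in the EIFE2 error theorem.
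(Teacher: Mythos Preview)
Your proposal is correct and follows essentially the same route as the paper: reduce $\delta_{ni}$ to $\delta_{ni}^{[1]}$ via $\psi_{1,i}=0$, invoke Lemma~\ref{lemma4} with $r=1$, bound the propagated term by \eqref{eq1-8}, bound each stage difference by $(\Delta t)^{1/2}\|E_{nj}\|_1$ using \eqref{666} with $\gamma=\tfrac12$ together with Lemma~\ref{lemma3}, and close by finite recursion on $i$. The only cosmetic difference is that the paper compares the nonlinear stage values against $u(t_n+c_j\Delta t)$ rather than $u_h(t_n+c_j\Delta t)$, which introduces an extra $h$ via Lemma~\ref{semi-estimate} in that step; your version picks up the $h$ solely from the defect bound, but the resulting inequality is the same. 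Your explicit remark about the bootstrapping hypothesis needed to invoke \eqref{rrr} for $U_{nj}$ is a point the paper leaves implicit.
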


\begin{proof}
	According to the definition of $\psi_{j,i}$ in (\ref{function}), we have by some manipulations that $\psi_{1, j}=0, j=1, \cdots, s$ when the consistency conditions (\ref{eq2-3}) is fulfilled. Therefore, the estimation of $\|\delta_{ni}\|_1$ can be converted to that of $\|\delta_{ni}^{[1]}\|_1$. Using the similar arguments for deriving the estimate \eqref{mid-estimate2} and \eqref{666} in Lemma \ref{lemma1}, we have 
	\begin{align}
	\begin{split} &\Big\|\Delta t \textstyle\sum\limits_{j=1}^{i-1}a_{ij}(-\Delta t L_h)\big(P_hf(t_n+c_j\Delta t ,U_{nj})-P_hf(t_n+c_j\Delta t ,u(t_n+c_j\Delta t )) \big)\Big\|_1\\
	&\lesssim  \textstyle\sum\limits_{j=1}^{i-1}(\Delta t)^{\frac12}\|\Delta t ^{\frac 1 2}L_h^{\frac 1 2}a_{ij}(-\Delta t L_h)\|_0\nonumber\\
	&\qquad \cdot\textstyle\max\limits_{2 \leq j \leq i -1} \|P_hf(t_n+c_j\Delta t ,U_{nj})-P_hf(t_n+c_j\Delta t ,u(t_n+c_j\Delta t ))\|_0\\
	&\lesssim (\Delta t)^{\frac 1 2}\textstyle\max\limits_{2 \leq j \leq i -1} \|P_hf(t_n+c_j\Delta t ,U_{nj})-P_hf(t_n+c_j\Delta t ,u(t_n+c_j\Delta t ))\|_0\\
	&\lesssim  (\Delta t)^{\frac 1 2}\textstyle\max\limits_{2 \leq j \leq i-1}\|E_{nj}\|_1+h.
	\end{split}
	\end{align}
	Note that $\|\delta_{ni}^{[1]}\|_1$ is uniformly bounded for $i = 1, \cdots, s$ (see \eqref{lemma4-1} in Lemma \ref{lemma4} with $r=1$). Recalling the relation \eqref{recursion-E}, we have by the triangle inequality that {
		\begin{equation*}
		\begin{split}
		\|E_{ni}\|_1 \lesssim&\; \|e^{-c_i\Delta t L_h}e_n\|_1+(\Delta t)^2\textstyle\sup\limits_{0 \leq \eta \leq 1}\|P_hf'(t_n+\eta\Delta t ,u(t_n+\eta \Delta t ))\|_1+h\\
		&+\Big\|\Delta t  \textstyle\sum\limits_{j=1}^{i-1}a_{ij}(-\Delta t L_h)\big(P_hf(t_n+c_j\Delta t ,U_{nj})\\
		&\qquad-P_hf(t_n+c_j\Delta t ,u(t_n+c_j\Delta t )) \big)\Big\|_1\\
		\lesssim&\; \|e_n\|_1 +  (\Delta t)^2 \textstyle\sup\limits_{0\leq \eta \leq 1} \|P_hf'(t_n+\eta \Delta t , u(t_n+\eta \Delta t ))\|_1+ h\\
		&+ (\Delta t)^{\frac 1 2}\textstyle\max\limits_{2 \leq j \leq i -1}\|E_{nj}\|_0\\
		\lesssim &\; \|e_n\|_1 +  (\Delta t)^2 \textstyle\sup\limits_{0\leq \eta \leq 1}\|f'(t_n+\eta \Delta t , u(t_n+\eta \Delta t ))\|_1 + h \\
		&+ (\Delta t)^{\frac 1 2}\textstyle\max\limits_{2 \leq j \leq i -1} \|E_{nj}\|_1.
		\end{split}
		\end{equation*}}
	Finally \eqref{E_n} is obtained by recursively using the above inequality.
\end{proof}

\begin{theorem}[Error estimate for the EIFE2 scheme] \label{thm4}
	Suppose that the function $f$ satisfies Assumptions \ref{assumption2} and \ref{assumption3}, and the exact solution $u(t)$ fulfills (\ref{strong-regularity2})-(\ref{strong-regularity3}) in Assumptions \ref{assumption4}.   There exists a constant $h_0>0$ such that if the spatial mesh size $h \leq h_0$, then the numerical solution $\{u_h^n\}$ produced by EIFE2 scheme  \eqref{second-order} satisfies
	\begin{equation}\label{err2}
	\|u(t_n)-u_h^n\|_1 \lesssim (\Delta t)^2 + h,\quad \forall\,n=1,\cdots, N_T,
	\end{equation}
	where the hidden constant is independent of $h$ and $\Delta t$.	
\end{theorem}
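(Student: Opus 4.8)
The plan is to follow the same route as in the proof of Theorem~\ref{thm7}, but with $r=2$ throughout, using the recurrence relations \eqref{recursion-E}--\eqref{recursion-e} with $s=2$. Since the semi-discrete error $\|u(t_n)-u_h(t_n)\|_1\lesssim h$ is already supplied by \eqref{erra}, by the triangle inequality \eqref{errtri} it suffices to bound $\|e_n\|_1=\|u_h^n-u_h(t_n)\|_1$ and show it is $\lesssim(\Delta t)^2+h$. Iterating the error recurrence \eqref{recursion-e} from $e_0=0$ produces
\begin{equation*}
e_n=\Delta t\textstyle\sum\limits_{j=1}^{n-1}e^{-(n-1-j)\Delta t L_h}\textstyle\sum\limits_{i=1}^{s}b_i(-\Delta t L_h)\big(P_hf(t_j+c_i\Delta t,U_{ji})-P_hf(t_j+c_i\Delta t,u_h(t_j+c_i\Delta t))\big)-\textstyle\sum\limits_{j=0}^{n-1}e^{-j\Delta t L_h}\delta_{n-j},
\end{equation*}
so the two tasks are to control the accumulated defects $\big\|\sum_{j=0}^{n-1}e^{-j\Delta t L_h}\delta_{n-j}\big\|_1$ and the nonlinear sum.

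First I would handle the defect term. For the two-stage scheme with order-two conditions one checks, exactly as in Lemma~\ref{Lemma5}, that $\psi_1(-\Delta t L_h)=0$ and $\psi_2(-\Delta t L_h)=0$ (the second vanishing is precisely the second-order condition on the weights $b_i$ and nodes $c_i$), hence $\delta_j=\delta_j^{[2]}$. Then \eqref{lemma4-2} in Lemma~\ref{lemma4} with $r=2$ gives directly
\begin{equation*}
\Big\|\textstyle\sum\limits_{j=0}^{n-1}e^{-j\Delta t L_h}\delta_{n-j}\Big\|_1=\Big\|\textstyle\sum\limits_{j=0}^{n-1}e^{-j\Delta t L_h}\delta_{n-j}^{[2]}\Big\|_1\lesssim(\Delta t)^2\textstyle\sup\limits_{0\leq t\leq T}\|f^{(2)}(t,u(t))\|_1+h,
\end{equation*}
and the supremum is finite by Assumptions~\ref{assumption2}--\ref{assumption4} (including \eqref{strong-regularity3}, needed since $r=2$), because $f^{(2)}(t,u(t))$ is a polynomial expression in $f$ and its derivatives evaluated along $u$, together with $u_t,u_{tt}$, all controlled in $H^1$ via the growth condition and a Sobolev/product-rule estimate like the one used in Lemma~\ref{lemma3}.

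Next I would treat the nonlinear sum. I split off the last term $j=n-1$ and estimate it using $\|\Delta t L_h^{1/2}b_i(-\Delta t L_h)\|_0\lesssim(\Delta t)^{1/2}\|\Delta t^{1/2}L_h^{1/2}b_i(-\Delta t L_h)\|_0\lesssim(\Delta t)^{1/2}$ from \eqref{666}, the local Lipschitz property of $f$ (Lemma~\ref{lemma3}), and Lemma~\ref{Lemma5} to pass from $\|E_{n-1,i}\|_1$ back to $\|e_{n-1}\|_1$; this contributes $(\Delta t)^{1/2}\|e_{n-1}\|_1+(\Delta t)^{5/2}+h$. For the remaining terms $0\le j\le n-2$ I insert $L_h^{1/2}$, use $\|L_h^{1/2}e^{-(n-1-j)\Delta t L_h}\|_0\lesssim t_{n-1-j}^{-1/2}$ from \eqref{eq1-8}, the uniform boundedness of $b_i(-\Delta t L_h)$ in $L^2$, Lemma~\ref{lemma3}, and again Lemma~\ref{Lemma5}, arriving at $\Delta t\sum_{j=0}^{n-2}t_{n-1-j}^{-1/2}\big(\|e_j\|_1+(\Delta t)^2+h\big)$. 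Collecting everything yields
\begin{equation*}
\|e_n\|_1\lesssim\Delta t\textstyle\sum\limits_{j=1}^{n-1}t_{n-j}^{-\frac12}\|e_j\|_1+(\Delta t)^2+h,
\end{equation*}
and the weakly-singular discrete Gronwall inequality (Theorem~6.1 with $\alpha=\tfrac12$ in \cite{DixonMcKee1986}) gives $\|e_n\|_1\lesssim(\Delta t)^2+h$; combining with \eqref{errtri} and \eqref{erra} proves \eqref{err2}.

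The main obstacle, and the step requiring genuine care rather than mechanical imitation of Theorem~\ref{thm7}, is establishing the two vanishing identities $\psi_1=\psi_2=0$ for $s=2$ and, more importantly, verifying that the accumulated $\delta^{[2]}$ term is $O((\Delta t)^2)$ rather than $O(\Delta t)$: this is exactly where the second-order order condition is consumed, and where one must confirm that Lemma~\ref{lemma4} applies with $r=2$ and that $\sup_{0\le t\le T}\|f^{(2)}(t,u(t))\|_1$ is finite under Assumptions~\ref{assumption2}--\ref{assumption4}. A secondary technical point is keeping all constants independent of $h$: since $L_h$ depends on $h$, one must route every exponential-operator bound through Lemma~\ref{lemma1} and the norm equivalence \eqref{norm-relation}, and use the semi-discrete estimate \eqref{semi-discrete1} to absorb the $u_h-u$ differences, so that the $h$ appearing in the final bound comes only from the explicitly tracked $+h$ terms.
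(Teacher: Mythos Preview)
Your proposal is correct and follows essentially the same route as the paper's proof: iterate the error recursion, use $\psi_1=\psi_2=0$ so that $\delta_{n+1}=\delta_{n+1}^{[2]}$ and Lemma~\ref{lemma4} with $r=2$ handles the accumulated defect, split the nonlinear sum into the $j=n-1$ term (bounded via \eqref{666}) and the remaining terms (bounded via \eqref{eq1-8} with the $t_{n-1-j}^{-1/2}$ weight), invoke Lemma~\ref{Lemma5} to reduce $\|E_{ji}\|_1$ to $\|e_j\|_1$, and close with the weakly singular discrete Gronwall inequality. Two minor bookkeeping slips to fix: the iterated nonlinear sum should run from $j=0$ (not $j=1$), and the vanishing of $\psi_1,\psi_2$ is verified directly from \eqref{function} and the consistency conditions, not ``as in Lemma~\ref{Lemma5}'' (that lemma concerns $\psi_{1,j}$).
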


\begin{proof}	
	Recalling the definition of $\psi_i$ in (\ref{function}), we can check that $\psi_{1}(-\Delta t L_h)=\psi_2(-\Delta t L_h)=0$, which implies that $\delta_{n+1}=\delta_{n+1}^{[2]}$ by Lagrangian interpolation theorem for $s = 2$. By \eqref{666} in Lemma \ref{lemma1}, we have
	\begin{equation}
	\label{eq4-4}
	\begin{split}
	&\Big\|\Delta t \textstyle\sum\limits_{j=0}^{n-1} e^{-(n-1-j)\Delta t L_h}\textstyle\sum\limits_{i=1}^s b_i(-\Delta t L_h)\big(P_hf(t_j+c_i\Delta t , U_{ji})\\
	&	\qquad-P_hf(t_j+c_i\Delta t ,u(t_j+c_i\Delta t )) \big)\Big\|_1
	\\
	&\lesssim  \Big\|\Delta t \textstyle\sum\limits_{i=1}^s b_i(-\Delta t L_h)\big(P_hf(t_{n-1}+c_i\Delta t ,U_{n-1,i})\\
	&\qquad-P_hf(t_{n-1}+c_i\Delta t ,u(t_{n-1}+c_i\Delta t ) \big)\Big\|_1 
	\\
	&\qquad+ \Big\|\Delta t \textstyle\sum\limits_{j=0}^{n-2} e^{-(n-1-j)\Delta t L_h}\textstyle\sum\limits_{i=1}^s \big(P_hf(t_j+c_i\Delta t ,U_{ji})\\
	&\qquad-P_hf(t_j+c_i\Delta t ,u(t_j+c_i\Delta t )) \big)\Big\|_1\\
	&:= {\rm II}_1+{\rm II}_2.
	\end{split}
	\end{equation}
	Using the similar arguments for deriving \eqref{mid-estimate2} and \eqref{555} and \eqref{666} in Lemma \ref{lemma1},  we can obtain
	\begin{equation}
	\label{add-1}
	\begin{split}
	{\rm II}_1\lesssim \,& \textstyle\sum\limits_{i=1}^s (\Delta t) ^{\frac 1 2}\|(\Delta t) ^{\frac 1 2}L_h^{\frac 1 2}b_i(-\Delta t L_h)\|_0\textstyle\max\limits_{1\le i\le s}\|P_hf(t_{n-1}+c_i\Delta t ,U_{n-1,i})\\
	&\qquad\qquad-P_hf(t_{n-1}+c_i\Delta t ,u(t_{n-1}+c_i\Delta t ))\|_0\\
	\lesssim \,& (\Delta t)^{\frac 1 2}\textstyle\max\limits_{1\le i\le s} \|P_hf(t_{n-1}+c_i\Delta t ,U_{n-1,i})\\
	&\qquad\qquad-P_hf(t_{n-1}+c_i\Delta t ,u(t_{n-1}+c_i\Delta t ))\|_0 \\
	\lesssim \,& (\Delta t)^{\frac 1 2}\textstyle\max\limits_{1 \leq i \leq s} \|E_{n-1,i}\|_1+h,
	\end{split}
	\end{equation}
	and
	\begin{equation}
	\label{add-2}
	\begin{split}
	{\rm II}_2
	\lesssim \,& \Big\|\Delta t L_h^{\frac 1 2}\textstyle\sum\limits_{j=0}^{n-2}e^{-(n-1-j)\Delta t L_h}\Big\|_0\textstyle\sup\limits_{ 0  \leq t \leq    T }\|P_hf(t,u(t))-P_hf(t,u_h(t))\|_0 \\&+ \textstyle\sum\limits_{j=0}^{n-2} \Delta t \|L_h^{\frac 1 2} e^{-(n-j-1)\Delta t L_h}\|_0\textstyle\max\limits_{1\le i \le s} \|P_hf(t_j+c_i\Delta t ,U_{ji})\\
	&\qquad-P_hf(t_j+c_i\Delta t ,u_h(t_j+c_i\Delta t ))\|_0 
	\\
	\lesssim\,& \Delta t\textstyle\sum\limits_{j=0}^{n-2}t_{n-j-1}^{-\frac 1 2}\textstyle\max\limits_{1\le i \le s}\|P_hf(t_j+c_i\Delta t ,U_{ji})\\
	&\qquad-P_hf(t_j+c_i\Delta t ,u_h(t_j+c_i\Delta t ))\|_0+h 
	\\
	\lesssim \,& \Delta t\textstyle\sum\limits_{j=0}^{n-2}t_{n-j-1}^{-\frac 1 2}\textstyle\max\limits_{1 \leq i \leq s} \|E_{ji}\|_1+h.
	\end{split}
	\end{equation}
	
	With the help of the estimates \eqref{eq4-4}, \eqref{add-1} and \eqref{add-2}, it follows from the relation \eqref{recursion-e} and \eqref{lemma4-2} in Lemma \ref{lemma4} (with $r=2$) that
	
	\begin{equation*}
	\begin{split}
	\|e_n\|_1 \leq\,& \Big\|\Delta t \textstyle\sum\limits_{j=0}^{n-1} e^{-(n-1-j)\Delta t L_h}\textstyle\sum\limits_{i=1}^s b_i(-\Delta t L_h)\big(P_hf(t_j+c_i\Delta t , U_{ji})\nonumber\\
	&\;-P_hf(t_j+c_i\Delta t ,u(t_j+c_i\Delta t )) \big)\Big\|_1+\Big\|\textstyle\sum\limits_{j=0}^{n-1}e^{-j\Delta t L_h}\delta_{n-j}^{[2]}\Big\|_1
	\\
	\lesssim \,& (\Delta t)^{\frac 1 2}\textstyle\max\limits_{1\le i \le s}  \|E_{n-1,i}\|_1
	+\Delta t\textstyle\sum\limits_{j=0}^{n-2}t_{n-j-1}^{-\frac 1 2}\textstyle\max\limits_{1\le i \le s}\|E_{ji}\|_1\\
	&
	+\Delta t^2\textstyle\sup\limits_{ 0  \leq t \leq    T }\|f^{(2)}(t,u(t))\|_1+h
	\\
	\lesssim \,& \Delta t\textstyle\sum\limits_{j=0}^{n-1}t_{n-j-1}^{-\frac 1 2}\textstyle\max\limits_{1\le i \le s} \|E_{ji}\|_1+(\Delta t)^2+h.
	\end{split}
	\end{equation*}
	This combined with the estimation of $\|E_{j,i}\|_1$ in Lemma \ref{Lemma5} and the discrete Gronwall inequality leads to
	\begin{equation}\label{err2c}
	\|u_h^n-u_h(t_n)\|_1 \lesssim (\Delta t)^2 + h.
	\end{equation}
	Finally, the combination of \eqref{errtri}, \eqref{erra} and \eqref{err2c} immediately gives  \eqref{err2}.	
\end{proof}

\begin{remark}
	It is worth noting that  no restriction on the time step size $\Delta t$ is imposed in Theorems \ref{thm7} and \ref{thm4}, which implies that the proposed EIFE method is stable with large time stepping.
\end{remark}

\begin{remark}
	When $s\ge 3$, the order conditions for the explicit exponential Runge-Kutta method do not hold automatically like the case of $s=1$ or $2$, and
	convergence analysis of the proposed EIFE method \eqref{fully} becomes much more complicated since the estimates of $\|e_n\|_1$ and $\|E_{ni}\|_1$ will be coupled together. We refer the reader to \cite{HochbruckOstermann2005b} for some details along this line, and rigorous error estimates of
	higher-order EIFE schemes would be an interesting open question.
	
\end{remark}

\section{FFT-based fast solution of the EIFE method}
\label{fastimpl}
We now present  fast solution of the EIFE method \eqref{fully} for solving the semilinear parabolic equation \eqref{eq1-1}.
The main idea is to take the advantage of simultaneously diagonalizing  the mass and coefficient matrices of the finite element system \eqref{eq3-11} with an orthogonal matrix whose multiplication with a vector can be efficiently implemented by FFT and tensor product spectral decomposition.


We  first focus on the case of homogeneous Dirichlet boundary condition \eqref{eq3-1}, and the extension to that of  nonhomogeneous Dirichlet boundary condition is straightforward by incorporating the given values of the exact solution on the boundary nodes into the discrete method as part of $f$.
Let us take the three dimensional problem for illustration, which also naturally works in any other $d$ dimensions. Suppose that $\Omega := \prod_{i=1}^3 [a_i,b_i]$ is divided into $N_x,N_y,N_z$ parts uniformly along the $x, y, z$ directions with the meshsizes $h_x, h_y, h_z$, respectively. Thus, as given in Section \ref{algorithm-description}, the finite element space $V_h$ is spanned by $\{\phi_{i}(x)\phi_j(y)\phi_k(z) \}$ where $i = 1, \cdots, N_x-1, j=1, \cdots, N_y-1, k = 1, \cdots, N_z-1$. Therefore, the finite element solution $u_h(t)$ of  the
semi-discrete (in space) problem \eqref{eq3-11} can be expressed as
\begin{equation*}
u_h(t, x, y, z) = \textstyle\sum\limits_{i=1}^{N_x-1} \textstyle\sum\limits_{j=1}^{N_y-1} \textstyle\sum\limits_{k = 1}^{N_z-1} u_{i,j,k}(t) \phi_i(x)\phi_j(y)\phi_k(z).
\end{equation*}
Define $U(t)=[u_{i, i, k}(t)]_{(N_x-1)\times (N_y-1) \times (N_z-1)}$
and let $F(t,U)$ be  the tensor with entries
$(f(t,u_{i, j, k}(t)), \phi_{i}(x)\phi_{j}(y)\phi_{k}(z)).$ As in \cite{JuZhang2015}, we introduce some tensor operations for later uses. Given  three matrices  $M_x\in{\mathbb R}^{(N_x-1)\times (N_x-1)}$, $M_y\in{\mathbb R}^{(N_y-1)\times (N_y-1)}$, $M_z\in{\mathbb R}^{(N_z-1)\times (N_z-1)}$, let us  define the following operations with $U$:
$\big(M_x \textcircled{x} U \big)_{ijk} := \textstyle\sum\limits_{r=1}^{N_x-1} (M_x)_{ir}U_{rjk}$, $\big(M_y \textcircled{y} U \big)_{ijk}:= \textstyle\sum\limits_{r=1}^{N_y-1} (M_y)_{jr}U_{irk}$, $\big(M_z \textcircled{z} U \big)_{ijk}:=\textstyle\sum\limits_{r=1}^{N_z-1} (M_z)_{kr}U_{ijr}$.

With the help of the above operations, we have by some direct manipulations that the variational scheme \eqref{eq3-11} can be reformulated to the following form
\begin{equation}
\label{eq3-3}
\left\{
\begin{split}
&A_x \textcircled{x} A_y \textcircled{y} A_z \textcircled{z} \frac{\dd{U}}{\dd t}+ D(B_x \textcircled{x} A_y \textcircled{y} A_z \textcircled{z} U \\
&\qquad\qquad\qquad+ A_x \textcircled{x} B_y \textcircled{y} A_z \textcircled{z} U  + A_x \textcircled{x} A_y \textcircled{y} B_z \textcircled{z} U)  = F(t,U),
\\
&U( 0 ) = \{P_h u_0\},
\end{split}
\right.
\end{equation}
where
\begin{equation*}
A_{\Diamond} = \textstyle\frac{h_{\Diamond}}{6} R_{N_{\Diamond}-1},  \quad B_{\Diamond}  =\textstyle \frac{1}{h_{\Diamond}} G^D_{N_{\Diamond} -1},\quad\mbox{for}\;\;{\Diamond}=x,y,z,
\end{equation*}
and 
\begin{equation*}
\begin{split}
R_p = \left[
\begin{matrix}
4 & 1  & &\\
1 & 4 & \ddots & \\
& \ddots & \ddots & 1\\
& & 1 & 4
\end{matrix}
\right]_{p \times p}, \quad
G^D_p = \left[
\begin{matrix}
2 & -1  & &\\
-1 & 2 & \ddots & \\
& \ddots & \ddots & -1\\
& & -1 & 2
\end{matrix}
\right]_{p \times p}.
\end{split}
\end{equation*}

An important observation is that   $A_x$ and $B_x$ can be diagonalized simultaneously by an orthogonal matrix  (\cite{HuangJu2019b}), and so do  the other two pairs $A_y$ and $B_y$, $A_z$ and $B_z$. Specifically, we have the following decompositions:
$$A_{\Diamond}=P_{\Diamond}\Lambda_{A_{\Diamond}}P_{\Diamond}^T,\quad B_{\Diamond}=P_{\Diamond}\Lambda_{B_{\Diamond}}P_{\Diamond}^T, \quad\mbox{for}\;\;{\Diamond}=x,y,z$$
with
$$ (P_{\Diamond})_{i,j} = \sin\Big(\textstyle\frac{ij\pi}{N_{\Diamond}} \Big), \quad i,j=1, \cdots, N_{\Diamond}-1,$$
$$\Lambda_{A_{\Diamond}} = \text{diag}(\lambda^{A_{\Diamond}}_1,\cdots,\lambda^{A_{\Diamond}}_{N_{\Diamond}-1}),\quad\lambda^{A_{\Diamond}}_i =\textstyle \frac {h_{\Diamond}}{6} \Big(6-4\sin\Big(\frac{i\pi}{2N_{\Diamond}}\Big)\Big),\quad i=1, \cdots, N_{\Diamond}-1,$$
$$\Lambda_{B_{\Diamond}} = \text{diag}(\lambda^{B_{\Diamond}}_1,\cdots,\lambda^{B_{\Diamond}}_{N_{\Diamond}-1}),\quad\lambda^{B_{\Diamond}}_i = \textstyle\frac {4}{h_{\Diamond}} \Big(\sin^2\Big(\frac{i\pi}{2N_{\Diamond}}\Big) \Big),\quad i=1, \cdots, N_{\Diamond}-1.$$

Let us define $ \widetilde{U}= P_x^T \textcircled{x} P_y^T \textcircled{y} P_z^T \textcircled{z} U$ (i.e., $U= P_x \textcircled{x} P_y\textcircled{y} P_z \textcircled{z}  \widetilde{U}$). Then  the equation (\ref{eq3-3}) can be transformed to
\begin{equation}
\label{append2}
\begin{split}
&\Lambda_{A_x} \textcircled{x} \Lambda_{A_y} \textcircled{y} \Lambda_{A_z} \textcircled{z} \frac{\dd\widetilde{U}}{\dd t} + D\Big(\Lambda_{B_x} \textcircled{x} \Lambda_{A_y} \textcircled{y} \Lambda_{A_z} \textcircled{z} + \Lambda_{A_x} \textcircled{x} \Lambda_{B_y} \textcircled{y} \Lambda_{A_z} \textcircled{z}  \\&
\qquad\quad+ \Lambda_{A_x} \textcircled{x} \Lambda_{A_y} \textcircled{y} \Lambda_{B_z} \textcircled{z}\Big) \widetilde{U}
= P_x^T \textcircled{x} P_y^T \textcircled{y} P_z^T \textcircled{z} F(t,P_x \textcircled{x} P_y\textcircled{y} P_z \textcircled{z}  \widetilde{U}).
\end{split}
\end{equation}
Similar to \cite{ZhuJu2016},  we define the tensor  $H=(h_{ijk})_{(N_x-1)\times (N_y-1) \times (N_z-1)}$ with
$$h_{ijk}=D\Big(\textstyle\frac{(\Lambda_{B_x})_{ii}}{(\Lambda_{A_x})_{ii}}+\frac{(\Lambda_{B_y})_{jj}}{(\Lambda_{A_y})_{jj}}+\frac{(\Lambda_{B_z})_{kk}}{(\Lambda_{A_z})_{kk}} \Big)$$
and another tensor $\widehat{H}=(\widehat{h}_{ijk})_{(N_x-1)\times (N_y-1) \times (N_z-1)}$ with $$\hat{h}_{ijk}=\textstyle\frac{1}{(\Lambda_{A_x})_{ii}(\Lambda_{A_y})_{jj}(\Lambda_{A_z})_{kk}}.$$
Define the operator $(e^*)$ as taking the exponential of each entry of a tensor as
$((e^*)^H)_{ijk}=e^{h_{ijk}},$
and another operator $\odot$ for element by element multiplication between two arrays of same sizes as
$(A \odot B)_{ijk}=A_{ijk}B_{ijk}.$
Thus we can rewrite the system (\ref{append2}) to the following ODE system:
\begin{equation}
\label{append3}
\frac{\dd\widetilde{U}}{\dd t} + H\odot\widetilde{U} = \widehat{H}\odot\big(P_x^T \textcircled{x} P_y^T  \textcircled{y} P_z^T \textcircled{z} F(t,P_x \textcircled{x} P_y\textcircled{y} P_z \textcircled{z}  \widetilde{U})\big).
\end{equation}
Hence, by the Duhamel principle, we can get the following  expression
\begin{equation}
\label{append4}
\begin{split}
&\widetilde{U}(t_{n+1}) = (e^*)^{-H\Delta t }\odot\widetilde{U}(t_n) + \int_{t_n}^{t_{n+1}} (e^*)^{-H(t_{n+1}-\tau)}\odot\widehat{H}\\
&\qquad\qquad\qquad\qquad\odot\big(P_x^T \textcircled{x} P_y^T  \textcircled{y} P_z^T \textcircled{z} F (\tau,P_x \textcircled{x} P_y\textcircled{y} P_z \textcircled{z}  \widetilde{U}) \big)\dd \tau,
\end{split}
\end{equation}
which is equivalently  the tensor representation of  \eqref{Duhamel}.

We remark that all the tensor product operations can be realized by the FFT. More precisely, the above transformation from $U$ to $\widetilde{U}$ (resp. from $\widetilde{U}$ to $U$) can be implemented by applying a Discrete Sine Transform (DST) (resp. an inverse DST, abbreviated as iDST) in each  dimension separately. That means,  for a $d$-dimensional problem, the computational  cost of the  EIFE method \eqref{fully} is of $O(\log_2(N)\prod_{i=1}^d N_i)$ per time step, where $N=\textstyle\max\{N_1,\cdots,N_d \}$. 

Next, we  also would like to briefly discuss  the equation \eqref{eq1-1} with periodic  boundary condition (although not theoretically analyzed in this paper). In this case,
the finite element space $V_h$ is spanned by $\{\phi_{i}(x)\phi_j(y)\phi_k(z) \}$ where $i = 1, \cdots, N_x, j=1, \cdots, N_y, k = 1, \cdots, N_z$ by periodic extension.
The same EIFE method and efficient implementation algorithm can be directly applied with slight modifications. In fact, the unknowns in this case is a three-order tensor given by
$U(t)=[u_{i, j, k}(t)]_{N_x\times N_y \times N_z}$,
and the corresponding mass  and coefficient matrices become respectively
\begin{equation*}
\begin{split}
A_{\Diamond} =   \textstyle\frac{h_{\Diamond}}{6} R_{N_{\Diamond}},  \quad B_{\Diamond}  = \textstyle\frac{1}{h_{\Diamond}} G^P_{N_{\Diamond}},\quad\mbox{for}\;\;{\Diamond}=x,y,z,
\end{split}
\end{equation*}
where
\begin{equation*}
G^P_p = \left[
\begin{matrix}
2 & -1  & &-1\\
-1 & 2 & \ddots & \\
& \ddots & \ddots & -1\\
-1& & -1 & 2
\end{matrix}
\right]_{p \times p}.
\end{equation*}
Then we have
$$A_{\Diamond}=P_{\Diamond}\Lambda_{A_{\Diamond}}P_{\Diamond}^T,\quad B_{\Diamond}=P_{\Diamond}\Lambda_{B_{\Diamond}}P_{\Diamond}^T, \quad{\Diamond}=x,y,z$$
with
$$ (P_{\Diamond})_{i,j} = \sin\Big(\textstyle\frac{ij\pi}{N_{\Diamond}} \Big), \quad i,j=1, \cdots, N_{\Diamond},$$
$$\Lambda_{A_{\Diamond}} = \text{diag}(\lambda^{A_{\Diamond}}_1,\cdots,\lambda^{A_{\Diamond}}_{N_{\Diamond}}),\quad\lambda^{A_{\Diamond}}_i = \textstyle\frac {h_{\Diamond}}{6} \Big(6-4\sin\Big(\frac{(i-1)\pi}{2N_{\Diamond}}\Big)\Big),\quad i=1, \cdots, N_{\Diamond},$$
$$\Lambda_{B_{\Diamond}} = \text{diag}(\lambda^{B_{\Diamond}}_1,\cdots,\lambda^{B_{\Diamond}}_{N_{\Diamond}}),\quad\lambda^{B_{\Diamond}}_i = \textstyle\frac {4}{h_{\Diamond}} \Big(\sin^2\Big(\frac{(i-1)\pi}{2N_{\Diamond}}\Big) \Big),\quad i=1, \cdots, N_{\Diamond}.$$
The other steps of implementation are similar to the ones as in the Dirichlet case,  but note that
the transformation from $U$ to $\widetilde{U}$ (resp. from $\widetilde{U}$ to $U$) in the case can be implemented by applying a Discrete Fourier Transform (DFT) (resp. an inverse DFT, abbreviated as iDFT) in each  dimension separately.

\section{Numerical experiments}\label{numerical}

In this section we will present some numerical experiments to verify the error estimates obtained in Section \ref{semilinear_theory} and demonstrate the performance of the EIFE method. All tests are done using Matlab on a Laptop with  Intel i5-8250U, 1.80GHz CPU and 8GB memory. Specifically, we choose
the EIFE1 scheme (\ref{Euler}) and the EIFE2 scheme \eqref{second-order} with $c_2=\frac 1 2$.

\subsection{Convergence tests}
We  verify the error estimates obtained in Theorem \ref{thm7} for EIFE1 scheme and Theorem \ref{thm4} for EIFE2 scheme, and numerical errors $\|u(t_n)-u_h^n\|_0$ and $\|u(t_n)-u_h^n\|_1$  are all evaluated at the terminal  time $ T $.

\begin{example}\label{ex2}
	In this example, we consider the following two-dimensional linear reaction-diffusion problem with homogeneous Dirichlet boundary condition:
	\begin{equation*}
	\left\{\begin{split}
	&u_t= \frac 1 2\Delta u - \frac 1 2 \pi^2 u + \frac 1 2 \pi^2 e^{-\pi^2 t}\sin(\pi x)\sin(\pi y), \quad (x,y) \in \Omega,\,0 \leq t \leq T\\
	&u(0, x, y) = (\sin(\pi x)-1)\sin(\pi y), \quad (x,y) \in \Omega, \\
	\end{split}  \right.
	\end{equation*}
	where $\Omega = (\frac 1 2, \frac 5 2) \times (0, 1)$ and the terminal time $T=1$. The exact solution is given by $u(t,x,y,z)=e^{-\pi^2t}(\sin(\pi x)-1)\sin(\pi y)$.
\end{example}

For the spatial accuracy tests, we run the EIFE2 scheme with fixed $N_T=1024$ (i.e., $\Delta t =T/N_T = 1/1024$) and uniformly refined spatial meshes with  $N_x\times N_y = 8\times 4$, $16 \times 8$, $32 \times 16$ and $64 \times 32$, respectively, so that the spatial mesh sizes are much coarser compared to the time step size.
For the temporal accuracy tests, we run the EIFE1 and EIFE2 schemes with fixed $N_x\times N_y = 2048\times 1024$ and uniform time step with $N_T= 16,32,64,128$.
Also, the overall cost per time step of EIFE method is tested with fixed $N_T=50$ and uniform spatial meshes with $N_x\times N_y=512 \times 256$, $1024 \times 512$, $2048\times 1024$ and $4096\times 2048$. Since the analysis of computing cost of EIFE1 scheme is same as EIFE2 scheme, we only test the running time with EIFE2 scheme.
All numerical results are reported in Table \ref{tab5}, including the solution errors  measured in the $L^2$ and $H^1$ norms and corresponding convergence rates. We observe the roughly second-order spatial convergence with respect to  both the  $L^2$ and $H^1$ norms, so the convergence order is one-order higher   than the  expected value  one in terms of the $H^1$-norm in this case. It is also easy to find  the first-order temporal convergence for the EIFE1 scheme and the  second-order temporal convergence for the EIFE2 scheme in both the  $L^2$ and $H^1$ norms, which coincide very well  with the error estimates derived in Theorems \ref{thm7} and  \ref{thm4}.
Table \ref{tab7} reports the average  CPU time costs  (seconds) per step for the EIFE2 scheme and  corresponding growth factors along the refinement of the spatial mesh.
The results clearly show that the computational cost grows almost linearly along with the number of mesh nodes, which matches well with
the property of FFT and demonstrates the high efficiency of our EIFE method.

\begin{table}[!ht]
	\renewcommand{\arraystretch}{1.0}\small
	\caption{Numerical results on the solution errors measured  in the $L^2$ and $H^1$ norms and corresponding convergence rates  for the EIFE1 and EIFE2 schemes in Example \ref{ex2}.  }
	\centering
	\begin{tabular}{|cccccc|}
		\hline
		$N_T$ & $N_x \times N_y$ & $\|u_h^n-u(t_n)\|_0$ & CR &  $\|u_h^n-u(t_n)\|_1$ & CR \\
		\hline
		\multicolumn{6}{|c|}{Spatial accuracy  tests  for EIFE2}\\
		\hline
		1024 & $8 \times 4$ & 2.1975e-05 & - &  5.8018e-05 & - \\
		1024 & $16 \times 8$ & 6.8220e-06 & 1.69 & 2.0817e-05 & 1.48 \\
		1024 & $32 \times 16$ & 1.8046e-06& 1.92 & 6.2344e-06& 1.74 \\
		1024 & $64 \times 32$ & 4.5693e-07 & 1.98 & 1.6276e-06 & 1.94 \\
		\hline
		\multicolumn{6}{|c|}{Temporal accuracy  tests  for EIFE1}\\
		\hline
		16 & $2048 \times 1024$ & 1.6807e-05 & - & 6.1597e-05 & - \\
		32 & $2048 \times 1024$ & 9.2840e-06 & 0.86 & 3.3985e-05 & 0.86 \\
		64 & $2048 \times 1024$ & 4.8687e-06 & 0.93 & 1.7814e-05 & 0.93 \\
		128 & $2048 \times 1024$ & 2.4919e-06 & 0.97 & 9.1158e-06 & 0.97\\
		\hline
		\multicolumn{6}{|c|}{Temporal accuracy  tests  for EIFE2}\\
		\hline
		16 & $2048 \times 1024$ & 8.5334e-06 & - & 3.1234e-05 & -\\
		32 & $2048 \times 1024$ & 1.6087e-06 & 2.41 & 5.8873e-06 & 2.41\\
		64 & $2048 \times 1024$ & 3.5955e-07 & 2.16 & 1.3157e-06 & 2.16\\
		128 & $2048 \times 1024$ & 8.4991e-08 & 2.08 & 3.1121e-07 & 2.08\\
		\hline
	\end{tabular}
	\label{tab5}
\end{table}

\begin{table}[!ht]
	\renewcommand{\arraystretch}{1.0}\small
	\caption{The average  CPU time costs  (seconds) per step under different spatial meshes and  corresponding growth factors with respect to the number of mesh nodes  for the EIFE2 scheme in Example \ref{ex2}.}
	\centering
	\begin{tabular}{|ccc|}
		\hline
		$N_x \times N_y$  & Average CPU time  & Growth\\
		&cost per step&factor \\
		\hline
		$512 \times 256$ & 1.121 & - \\
		$1024 \times 512$  & 4.444 & 0.99 \\
		$2048 \times 1024$ & 17.516 & 0.99 \\
		$4096 \times 2048$  & 75.944 & 1.06 \\
		\hline
	\end{tabular}
	\label{tab7}
\end{table}

\begin{example}\label{ex3}
	In this example, we consider the traveling wave problem governed by the
	following 3D Allen-Cahn equation with double-well potential function:
	\begin{equation*}
	\left\{\begin{split}
	&u_t= \Delta u -\frac{1}{\epsilon^2} (u^3-u), \quad (x,y,z) \in \Omega ,\; 0 \leq t \leq T,\\
	&u(0, x, y, z) = \frac 1 2 (1 - \tanh(\frac{x}{2\sqrt{2} \epsilon})), \quad (x,y,z) \in \Omega, \\
	\end{split}  \right.
	\end{equation*}
	where $\Omega = (0, \sqrt{2}) \times (0, \frac 1 8) \times (0, \frac 1 8)$. 	
	The exact solution is given by $u(t,x,y,z)=\frac 1 2 (1-\tanh (\frac{x-st}{2\sqrt{2}\epsilon}))$, where $s = \frac{3}{\sqrt{2}\epsilon}$, and the Dirichlet boundary condition is correspondingly imposed, which is clearly nonhomogeneous. The terminal time is taken to be $T=\frac{3\sqrt{2} \epsilon}{5}$.
\end{example}

We set the interface thickness parameter $\epsilon=0.05$.
For the spatial accuracy tests, we run the EIFE2 scheme with fixed $N_T=1024$ and uniformly refined spatial meshes with  $N_x\times N_y = 64\times 4\times 4$, $128\times 8\times 8$, $256\times 16\times 16$ and  $512\times 32\times 32$, respectively, so that the spatial mesh sizes are much coarser compared to the time step size.
For the temporal accuracy tests,  we run the EIFE1 and  EIFE2 schemes with fixed $N_x\times N_y =  1024\times 64\times 64$ and uniformly  refined time step sizes with
$N_T= 16,32,64$ and $128$. All numerical results are reported in Table \ref{tab6}, including the errors measured in the  $L^2$ and $H^1$ norms and corresponding convergence rates. We still observe the roughly second-order spatial convergence in the  $L^2$-norm but it drops to between 1.11 to 1.28 (but still higher than one)
in the $H^1$-norm. It is also  seen that  the temporal convergence is just slightly higher than one for the EIFE1 scheme and  two for the EIFE2 scheme, which basically match the theoretical results.

\begin{table}[!ht]
	\renewcommand{\arraystretch}{1.0}\small
	\centering
	\caption{Numerical results on the solution errors measured  in the $L^2$ and $H^1$ norms and corresponding convergence rates for the EIFE1 and EIFE2 schemes in Example \ref{ex3}. }
	\begin{tabular}{|cccccc|}
		\hline
		$N_T$ & $N_x\times N_y\times N_z$ & $\|u_h^n-u(t_n)\|_0$ & CR & $\|u_h^n-u(t_n)\|_1$ & CR \\
		\hline
		\multicolumn{6}{|c|}{Spatial accuracy  tests  for EIFE2 }\\
		\hline
		1024 & $64 \times 4 \times 4$ & 5.6535e-06 & - & 1.0563e-04 & - \\
		1024 & $128 \times 8 \times 8$ & 1.5135e-06 & 1.90 & 4.8833e-05 & 1.11 \\
		1024 & $256 \times 16 \times 16$ & 4.3188e-07 & 1.81 & 2.0047e-05 & 1.28 \\
		1024 & $512 \times 32 \times 32$ & 1.2193e-07 & 1.82 & 8.2719e-06 & 1.28\\
		\hline
		\multicolumn{6}{|c|}{Temporal accuracy tests for EIFE1}\\
		\hline
		16 & $1024 \times 64 \times 64$ & 2.4559e-04 & - & 3.3100e-02 & - \\
		32 & $1024 \times 64 \times 64$ & 7.5000e-05 & 1.71 & 1.0500e-02 & 1.66 \\
		64 & $1024 \times 64 \times 64$ & 2.9850e-05 & 1.33 & 4.2000e-03 & 1.32 \\
		128 & $1024 \times 64 \times 64$ & 1.2765e-05 & 1.23 & 1.9000e-03 & 1.14\\
		\hline
		\multicolumn{6}{|c|}{Temporal accuracy tests for EIFE2}\\
		\hline
		16 & $1024 \times 64 \times 64$ & 9.5466e-04 & - & 3.6900e-02 & -\\
		32 & $1024 \times 64 \times 64$ & 3.0789e-05 & 4.95 & 2.5000e-03 & 3.88\\
		64 & $1024 \times 64 \times 64$ & 4.6133e-06 & 2.74 & 4.8362e-04 & 2.37\\
		128 & $1024 \times 64 \times 64$ & 9.5582e-07 & 2.27 & 1.1057e-04 & 2.13\\
		\hline
	\end{tabular}
	\label{tab6}
\end{table}

\subsection{3D Grain coarsening simulations}
We now illustrate the performance of the proposed EIFE method through numerical simulation of the 3D grain coarsening process. In particular, the EIFE2 scheme is used for its higher-order accuracy in time.

\begin{example}\label{ex4}
	We consider the grain coarsening process governed by the following 3D Allen-Cahn equation with Flory-Huggins potential function:
	\begin{equation*}
	u_t = \epsilon^2 \Delta u + \frac{\theta}{2}\ln\frac{1-u}{1+u}+\theta_cu, \quad (x,y,z)\in \Omega, \; 0 \leq t \leq T,
	\end{equation*}
	where $\Omega = (0, 1)^3$.  The initial data is generated by random numbers on each mesh point ranging from $-0.9$ to $0.9$, and the periodic boundary condition is imposed. This equation can be regarded as the $L^2$ gradient flow of  the following  energy functional
	\begin{equation*}
	E(u) = \int_{\Omega} \frac{\theta}{2}\Big((1+u)\ln(1+u)+(1-u)\ln(1-u) \Big)-\frac{\theta_c}{2}u^2+\frac{\epsilon^2}{2}|\nabla u|^2\dd \bm x,
	\end{equation*}
	and thus the energy monotonically decays along the time.
\end{example}

We set the interface thickness parameter  $\epsilon=0.01$ and the Flory-Huggins potential parameters $\theta_c=1.6, \theta = 0.8$. This problem satisfies the  maximum bound principle with the maximum bound value $\gamma\approx 0.9575$, i.e., $|u(t,\bm x)|\leq \gamma$ for all $\bm x\in\Omega$ and $t\ge 0$ \cite{LLJF2021,ChenJing2022}. The terminal time is set to be $T=20$ for our simulation. We take the spatial mesh with $N_x=N_y=N_z=128$ which means the mesh size $h=1/128$, and the total time step size $N_T=2048$ (i.e., $\Delta t = T/N_T=5/512$). Evolutions of the supremum norm and the energy of the numerical solutions produced by the EIFE2 scheme are plotted in Fig. \ref{fig32}. We observe that  the maximum bound principle is well preserved and the energy also decays monotonically along the time. Fig. \ref{fig33} presents simulated phase structures of the numerical solutions at times $t=$ 0, 2.5, 5, 10, 15, and  20, and from which we can clearly observe the coarsening process of the 3D grains.

\begin{figure}[!ht]
	\centerline{
		\includegraphics[width = 150pt,height=120pt]{./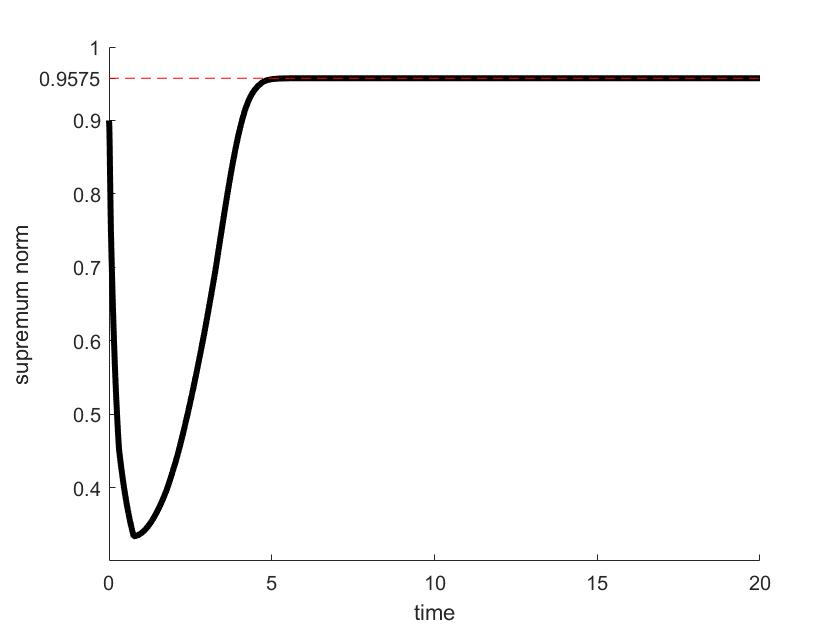}
		\includegraphics[width = 150pt,height=120pt]{./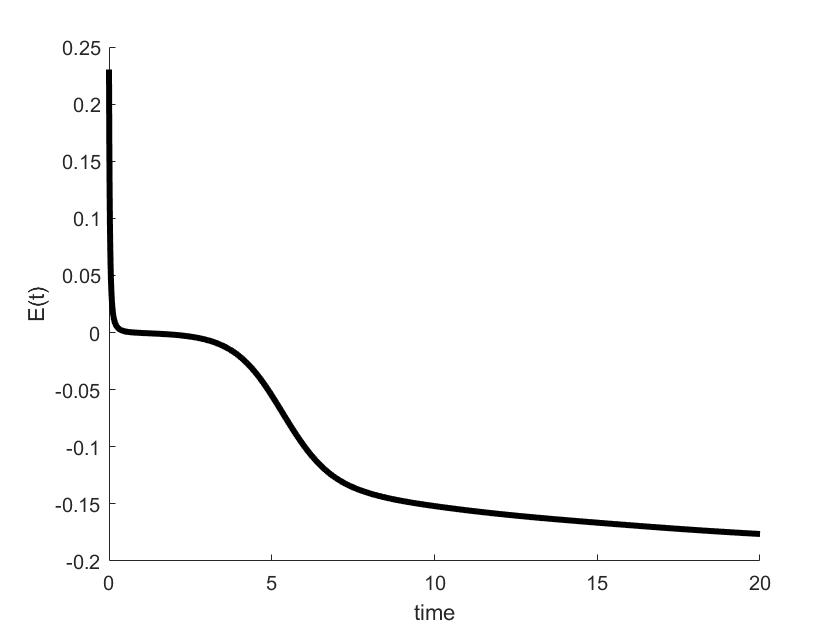}}\vspace{-0.2cm}
	\caption{The evolutions of supremum norm (left) and energy (right) of the numerical solution produced by the EIFE2 scheme for Example \ref{ex4}.}
	\label{fig32}
\end{figure}

\begin{figure}[!ht]
	\centerline{
		\includegraphics[width = 120pt,height=120pt]{./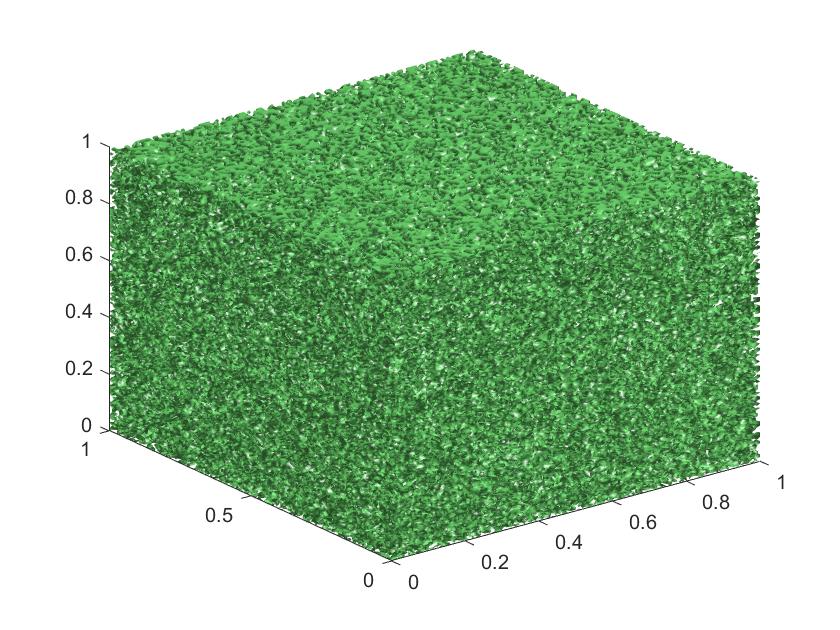}		
		\includegraphics[width = 120pt,height=120pt]{./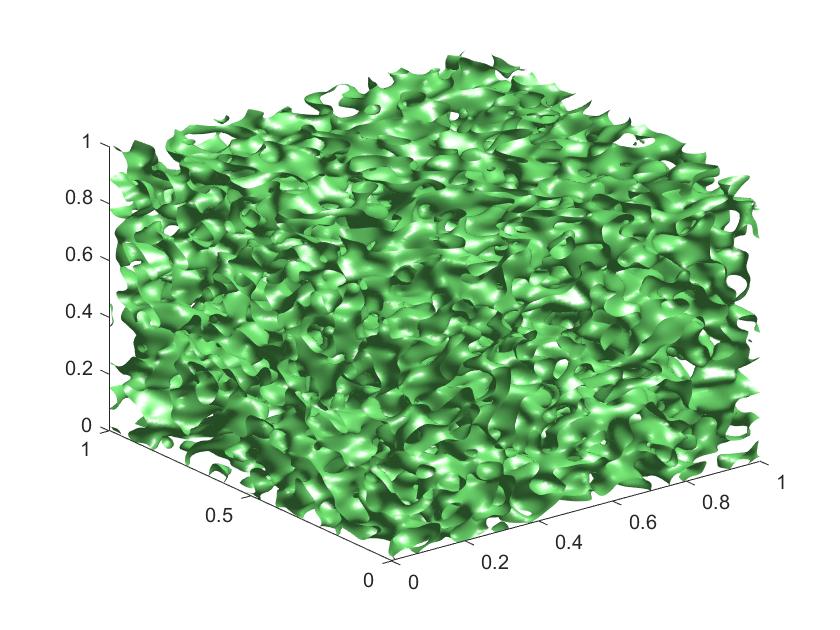}
		\includegraphics[width = 120pt,height=120pt]{./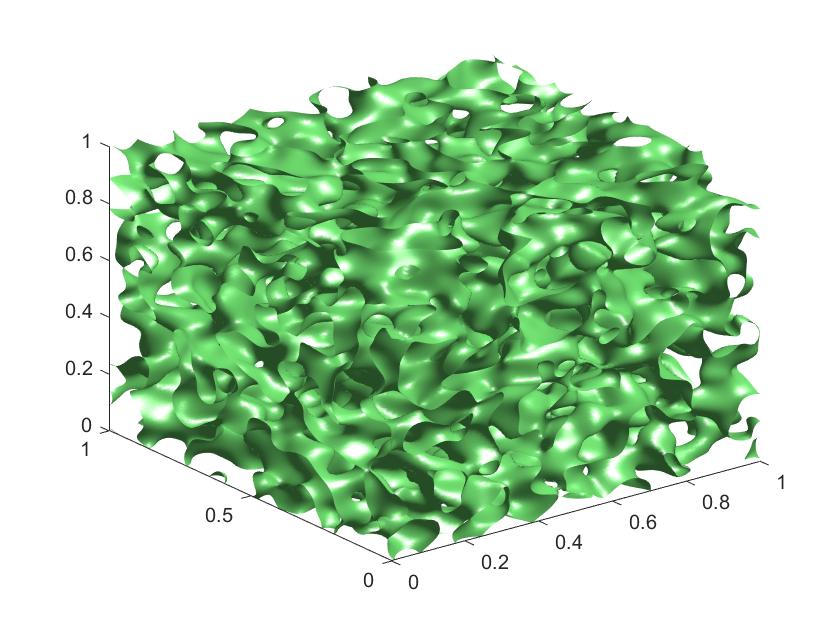}
	}\vspace{-0.2cm}
	\centerline{
		\includegraphics[width = 120pt,height=120pt]{./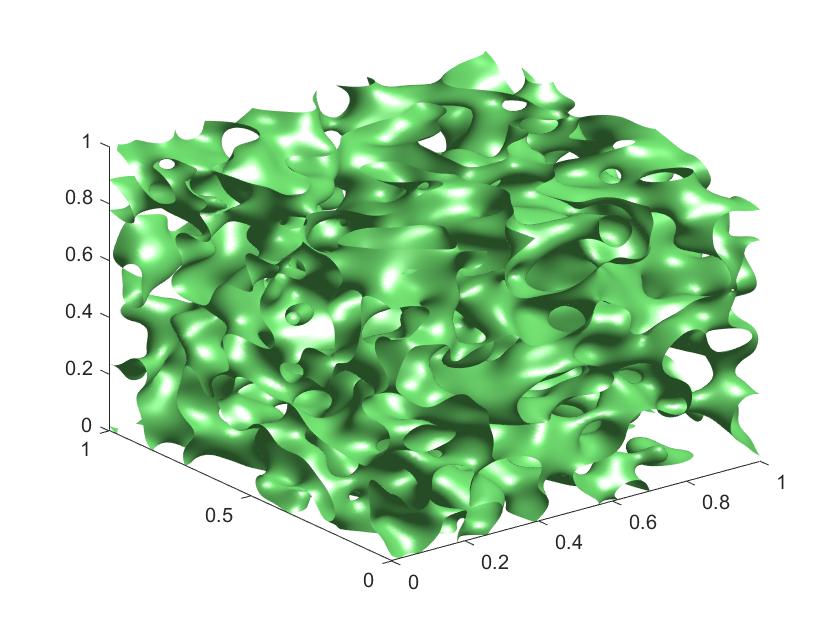}
		\includegraphics[width = 120pt,height=120pt]{./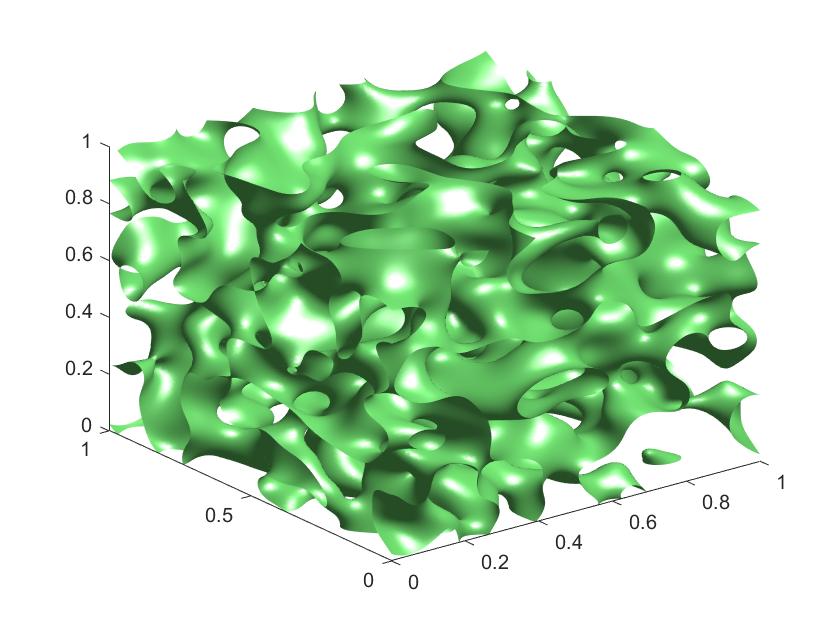}
		\includegraphics[width = 120pt,height=120pt]{./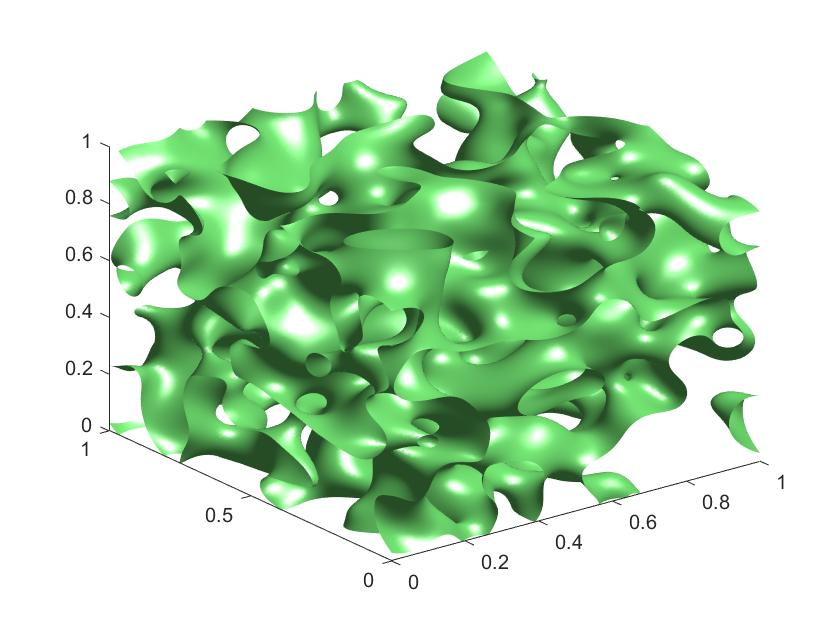}
	}\vspace{-0.2cm}
	\caption{Simulated 3D phase structures at $t=0, 2.5, 5, 10, 15, 20$ (from left to right and top to bottom) produced by the EIFE2 scheme for Example \ref{ex4}.}
	\label{fig33}
\end{figure}

\section{Conclusions}\label{seccon}

In this paper, we develop an efficient EIFE method for solving a class of semilinear parabolic equations taking the form \eqref{eq1-1} in regular domains, in which
the fully-discrete solution is  obtained by using first the finite element method for spatial discretization and then explicit exponential Runge-Kutta approximation for temporal integration. The EIFE method allows for fast implementation based on FFT and tensor product spectral decomposition. We  successfully
derive optimal error estimates in the $H^1$-norm for the EIFE method with one and two RK stages  when
the Dirichlet boundary condition s imposed. Some numerical examples are also presented to demonstrate the accuracy and high efficiency of the proposed method.
Rigorous error analysis of the EIFE method in the $L^2$-norm and  for the model problem with periodic boundary condition still remain to be explored.
In addition, the numerical method and corresponding error analysis framework developed in this paper also naturally enable us to further investigate the localized ETD methods \cite{LiJu2021, HoangJu2018} with solid theoretical support.


\bibliographystyle{abbrv}
\bibliography{ref}

\begin{thebibliography}{10}

\bibitem{Acta1979}
S.~Allen and J.~Cahn.
\newblock A microscopic theory for antiphase domain boundary motion and its
  application to antiphase domain coarsening.
\newblock {\em Acta Metall.}, 27:1085--1095, 06 1979.

\bibitem{AscherRuuth1995}
Uri~M. Ascher, Steven~J. Ruuth, and Brian T.~R. Wetton.
\newblock Implicit-explicit methods for time-dependent partial differential
  equations.
\newblock {\em SIAM J. Numer. Anal.}, 32(3):797--823, 1995.

\bibitem{BoydJohn2001}
John~P. Boyd.
\newblock {\em Chebyshev and {F}ourier {S}pectral {M}ethods}.
\newblock Dover Publications, Inc., Mineola, NY, second edition, 2001.

\bibitem{BrambleXu1991}
James~H. Bramble and J.~Xu.
\newblock Some estimates for a weighted {$L^2$} projection.
\newblock {\em Math. Comp.}, 56(194):463--476, 1991.

\bibitem{BrennerScott2008}
Susanne~C. Brenner and L.~Ridgway Scott.
\newblock {\em The {M}athematical {T}heory of {F}inite {E}lement {M}ethods}.
\newblock Springer, New York, third edition, 2008.

\bibitem{ChenJing2022}
W.~Chen, J.~Jing, C.~Wang, X.~Wang, and Steven~M. Wise.
\newblock A modified {C}rank-{N}icolson numerical scheme for the
  {F}lory-{H}uggins {C}ahn-{H}illiard model.
\newblock {\em Commun. Comput. Phys.}, 31(1):60--93, 2022.

\bibitem{Ciarlet1978}
Philippe~G. Ciarlet.
\newblock {\em The {F}inite {E}lement {M}ethod for {E}lliptic {P}roblems}.
\newblock North-Holland Publishing Co., Amsterdam-New York-Oxford, 1978.

\bibitem{CoxMatthews2002}
S.~M. Cox and P.~C. Matthews.
\newblock Exponential time differencing for stiff systems.
\newblock {\em J. Comput. Phys.}, 176(2):430--455, 2002.

\bibitem{DixonMcKee1986}
J.~Dixon and S.~McKee.
\newblock Weakly singular discrete {G}ronwall inequalities.
\newblock {\em Z. Angew. Math. Mech.}, 66(11):535--544, 1986.

\bibitem{Driscoll2002}
Tobin~A. Driscoll.
\newblock A composite {R}unge-{K}utta method for the spectral solution of
  semilinear {PDE}s.
\newblock {\em J. Comput. Phys.}, 182(2):357--367, 2002.

\bibitem{DuFeng2020}
Q.~Du and X.~Feng.
\newblock The phase field method for geometric moving interfaces and their
  numerical approximations.
\newblock In {\em Geometric {P}artial {D}ifferential {E}quations. {P}art {I}},
  volume~21, pages 425--508. Elsevier/North-Holland, Amsterdam, 2020.

\bibitem{DGP1992}
Q.~Du, M.~D. Gunzburger, and J.~S. Peterson.
\newblock Analysis and approximation of the {G}inzburg–{L}andau model of
  superconductivity.
\newblock {\em SIAM Rev.}, 34(1):54--81, 1992.

\bibitem{DJLQ2019}
Q.~Du, L.~Ju, X.~Li, and Z.~Qiao.
\newblock Maximum principle preserving exponential time differencing schemes
  for the nonlocal {A}llen--{C}ahn equation.
\newblock {\em SIAM J. Numer. Anal.}, 57(2):875--898, 2019.

\bibitem{DuJuLiQiao2021}
Q.~Du, L.~Ju, X.~Li, and Z.~Qiao.
\newblock Maximum bound principles for a class of semilinear parabolic
  equations and exponential time differencing schemes.
\newblock {\em SIAM Rev.}, 63(2):317--359, 2021.

\bibitem{DuZhu2004}
Q.~Du and W.~Zhu.
\newblock Stability analysis and application of the exponential time
  differencing schemes.
\newblock {\em J. Comput. Math.}, 22(2):200--209, 2004.

\bibitem{FePr2003}
Xiaobin Feng and Andreas Prohl.
\newblock Numerical analysis of the allen-cahn equation and approximation for
  mean curvature flows.
\newblock {\em Numerische Mathematik}, 94:33--65, 2003.

\bibitem{HoangJu2018}
Thi-Thao-Phuong Hoang, L.~Ju, and Z.~Wang.
\newblock Overlapping localized exponential time differencing methods for
  diffusion problems.
\newblock {\em Commun. Math. Sci.}, 16(6):1531--1555, 2018.

\bibitem{HochbruckOstermann2005b}
M.~Hochbruck and A.~Ostermann.
\newblock Explicit exponential {R}unge-{K}utta methods for semilinear parabolic
  problems.
\newblock {\em SIAM J. Numer. Anal.}, 43(3):1069--1090, 2005.

\bibitem{HochbruckOstermann2005a}
M.~Hochbruck and A.~Ostermann.
\newblock Exponential {R}unge-{K}utta methods for parabolic problems.
\newblock {\em Appl. Numer. Math.}, 53(2-4):323--339, 2005.

\bibitem{HochbruckOstermann2010}
M.~Hochbruck and A.~Ostermann.
\newblock Exponential integrators.
\newblock {\em Acta Numer.}, 19:209--286, 2010.

\bibitem{HochbruckOstermann2008}
M.~Hochbruck, A.~Ostermann, and J.~Schweitzer.
\newblock Exponential {R}osenbrock-type methods.
\newblock {\em SIAM J. Numer. Anal.}, 47(1):786--803, 2008/09.

\bibitem{HuangJu2019b}
J.~Huang, L.~Ju, and B.~Wu.
\newblock A fast compact exponential time differencing method for semilinear
  parabolic equations with {N}eumann boundary conditions.
\newblock {\em Appl. Math. Lett.}, 94:257--265, 2019.

\bibitem{HuangJu2019a}
J.~Huang, L.~Ju, and B.~Wu.
\newblock A fast compact time integrator method for a family of general order
  semilinear evolution equations.
\newblock {\em J. Comput. Phys.}, 393:313--336, 2019.

\bibitem{IGG2018}
L.~Isherwood, Z.~J. Grant, and S.~Gottlieb.
\newblock Strong stability preserving integrating factor {R}unge-{K}utta
  methods.
\newblock {\em SIAM J. Numer. Anal.}, 56(6):3276--3307, 2018.

\bibitem{JuLi2021}
L.~Ju, X.~Li, Z.~Qiao, and J.~Yang.
\newblock Maximum bound principle preserving integrating factor {R}unge-{K}utta
  methods for semilinear parabolic equations.
\newblock {\em J. Comput. Phys.}, 439:110405, 2021.

\bibitem{JuZhang2015}
L.~Ju, J.~Zhang, L.~Zhu, and Q.~Du.
\newblock Fast explicit integration factor methods for semilinear parabolic
  equations.
\newblock {\em J. Sci. Comput.}, 62(2):431--455, 2015.

\bibitem{KassamTrefethen2005}
Aly-Khan Kassam and Lloyd~N. Trefethen.
\newblock Fourth-order time-stepping for stiff {PDE}s.
\newblock {\em SIAM J. Sci. Comput.}, 26(4):1214--1233, 2005.

\bibitem{Lawson1967}
J.~Douglas Lawson.
\newblock Generalized {R}unge-{K}utta processes for stable systems with large
  {L}ipschitz constants.
\newblock {\em SIAM J. Numer. Anal.}, 4:372--380, 1967.

\bibitem{LLJF2021}
J.~Li, X.~Li, L.~Ju, and X.~Feng.
\newblock Stabilized integrating factor {R}unge-{K}utta method and
  unconditional preservation of maximum bound principle.
\newblock {\em SIAM J. Sci. Comput.}, 43(3):A1780--A1802, 2021.

\bibitem{LiJu2021}
Xiao Li, Lili Ju, and Thi-Thao-Phuong Hoang.
\newblock Overlapping domain decomposition based exponential time differencing
  methods for semilinear parabolic equations.
\newblock {\em BIT}, 61(1):1--36, 2021.

\bibitem{LUAN2014}
Vu~Thai Luan and Alexander Ostermann.
\newblock Explicit exponential runge–kutta methods of high order for
  parabolic problems.
\newblock {\em Journal of Computational and Applied Mathematics}, 256:168--179,
  2014.

\bibitem{BorislavWill2005}
Borislav~V. Minchev and Will~M. Wright.
\newblock {\em A {R}eview of {E}xponential {I}ntegrators for {F}irst {O}rder
  {S}emilinear {P}roblems}.
\newblock preprint. Norwegian University of Science and Technology Trondheim,
  Norway, 2005.

\bibitem{MohebbiDehghan2010}
A.~Mohebbi and M.~Dehghan.
\newblock High-order solution of one-dimensional sine-{G}ordon equation using
  compact finite difference and {DIRKN} methods.
\newblock {\em Math. Comput. Modelling}, 51(5-6):537--549, 2010.

\bibitem{NZZ2006}
Q.~Nie, Y.~Zhang, and R.~Zhao.
\newblock Efficient semi-implicit schemes for stiff systems.
\newblock {\em J. Comput. Phys.}, 214:521–537, 2006.

\bibitem{Pope1963}
David~A. Pope.
\newblock An exponential method of numerical integration of ordinary
  differential equations.
\newblock {\em Comm. ACM}, 6:491--493, 1963.

\bibitem{RoSc2021}
Frédéric Rousset and Katharina Schratz.
\newblock A general framework of low regularity integrators.
\newblock {\em SIAM Journal on Numerical Analysis}, 59(3):1735--1768, 2021.

\bibitem{SanzCalvo1994}
J.~M. Sanz-Serna and M.~P. Calvo.
\newblock {\em Numerical {H}amiltonian {P}roblems}, volume~7.
\newblock Chapman \& Hall, London, 1994.

\bibitem{ShXuYa2019}
Jie Shen, Jie Xu, and Jiang Yang.
\newblock A new class of efficient and robust energy stable schemes for
  gradient flows.
\newblock {\em SIAM Review}, 61(3):474--506, 2019.

\bibitem{Roger1984}
Roger Temam, editor.
\newblock {\em The {M}athematical {B}asis of {F}inite {E}lement {M}ethods},
  volume 343.
\newblock AMS Chelsea Publishing, 1984.

\bibitem{ThomeeVidar2006}
Vidar Thom\'{e}e.
\newblock {\em Galerkin {F}inite {E}lement {M}ethods for {P}arabolic
  {P}roblems}.
\newblock Springer-Verlag, Berlin, second edition, 2006.

\bibitem{WZN2014}
D.~Wang, L.~Zhang, and Q.~Nie.
\newblock Array-representation integration factor method for high dimensional
  systems.
\newblock {\em J. Comput. Phys.}, 258:585–600, 2014.

\bibitem{WhalenBrio2015}
P.~Whalen, M.~Brio, and J.~V. Moloney.
\newblock Exponential time-differencing with embedded {R}unge-{K}utta adaptive
  step control.
\newblock {\em J. Comput. Phys.}, 280:579--601, 2015.

\bibitem{WWL2009}
S.~M. Wise, C.~Wang, and J.~S. Lowengrub.
\newblock An energy-stable and convergent finite-difference scheme for the
  phase field crystal equation.
\newblock {\em SIAM Journal on Numerical Analysis}, 47(3):2269--2288, 2009.

\bibitem{Yang2016}
Xiaofeng Yang.
\newblock Linear, first and second-order, unconditionally energy stable
  numerical schemes for the phase field model of homopolymer blends.
\newblock {\em Journal of Computational Physics}, 327:294--316, 2016.

\bibitem{ZhuJu2016}
L.~Zhu, L.~Ju, and W.~Zhao.
\newblock Fast high-order compact exponential time differencing {R}unge-{K}utta
  methods for second-order semilinear parabolic equations.
\newblock {\em J. Sci. Comput.}, 67(3):1043--1065, 2016.

\end{thebibliography}
\end{document}